\newtheorem{lemma}{Lemma}
\newtheorem{proposition}{Proposition}
\newtheorem{theorem}{Theorem}
\let\old@ssect\@ssect 
\def\@ssect#1#2#3#4#5#6{%
  \NR@gettitle{#6}
  \old@ssect{#1}{#2}{#3}{#4}{#5}{#6}
}
\newcommand{\grad}{\normalfont{\text{grad}}}
\newcommand{\R}[1][\empty]{\mathbb{R}^{#1}}
\newenvironment{proof}{\paragraph{Proof:}}{\hfill$\square$}
\newcommand{\g}{\mathfrak{g}}
\newcommand{\Sg}{\mathcal{S}}
\newcommand{\Tg}{\mathcal{T}}
\newcommand{\Exp}{\normalfont{\text{Exp}}}
\newcommand{\Log}{\normalfont{\text{Log}}}
\newcommand{\ext}{\normalfont{\text{ext}}}
\newcommand{\Bi}{\normalfont{\text{Bi}}}
\newcommand{\N}{\mathbb{N}}
\newcommand{\tr}{\normalfont{\text{tr}}}
\newcommand{\SO}{\normalfont{\text{SO}}}
\def\widebreve{\mathpalette\wide@breve}
\def\wide@breve#1#2{\sbox\z@{$#1#2$}%
     \mathop{\vbox{\m@th\ialign{##\crcr
\kern0.08em\brevefill#1{0.8\wd\z@}\crcr\noalign{\nointerlineskip}%
                    $\hss#1#2\hss$\crcr}}}\limits}
\def\brevefill#1#2{$\m@th\sbox\tw@{$#1($}%
  \hss\resizebox{#2}{\wd\tw@}{\rotatebox[origin=c]{90}{\upshape(}}\hss$}
\DeclarePairedDelimiterX\set[1]\lbrace\rbrace{#1}
\newtheorem{corollary}{Corollary}
\newtheorem{definition}{Definition}
\newtheorem{remark}{Remark}
\begin{document}
\begin{frontmatter}

\title{Reduction of Sufficient Conditions in Variational Obstacle Avoidance Problems\thanksref{footnoteinfo}} 

\thanks[footnoteinfo]{The authors acknowledge financial support from Grant PID2022-137909NB-C21 funded by MCIN/AEI/ 10.13039/501100011033. The project that gave rise to these results received the support of a fellowship from ”la Caixa”
Foundation (ID 100010434). The fellowship code is LCF/BQ/DI19/11730028. Additionally, support has been given by the “Severo Ochoa Programme for Centres of Excellence”in R$\&$D (CEX2019-000904-S). }

\author[First]{Jacob R. Goodman} 
\author[Second]{Leonardo J. Colombo} 

\address[First]{J. Goodman is with Antonio de Nebrija University, Departamento de Informática, Escuela Politécnica Superior, C. de Sta. Cruz de Marcenado, 27, 28015, Madrid, Spain. email: jacob.goodman@nebrija.es}
   \address[Second]{L.Colombo is with Centre for Automation and Robotics (CSIC-UPM), Ctra. M300 Campo Real, Km 0,200, Arganda
del Rey - 28500 Madrid, Spain. email:  leonardo.colombo@csic.es}

\begin{abstract}                
This paper studies sufficient conditions in a variational obstacle avoidance problem on complete Riemannian manifolds. That is, we minimize an action functional, among a set of admissible curves, which depends on an artificial potential function used to avoid obstacles. We provide necessary and sufficient conditions under which the resulting critical points—the so-called modified Riemannian cubics—are local minimizers. We then study the theory of reduction by symmetries of sufficient conditions for optimality in variational obstacle avoidance problems on Lie groups endowed with a left-invariant metric. This amounts to left-translating the Bi-Jacobi fields described to the Lie algebra, and studying the corresponding bi-conjugate points. New conditions are provided in terms of the invertibility of a certain matrix.
\end{abstract}

\begin{keyword} Variational problems on Riemannian Manifolds, Obstacle avoidance, Sufficient conditions for optimality, Reduction by symmetries.
\end{keyword}

\end{frontmatter}

\section{Introduction}
Path planning has become ubiquitous in fields such as robotics, industrial engineering, physics, biology, and related disciplines. Typically, we have a mechanical system governed by some physical laws or control schemes, and we wish for it to connect some set of knot points (interpolating given positions and velocities, and potentially higher order derivatives \cite{CLACC, CroSil:95}) while minimizing some quantity such as time or energy (e.g. battery consumption). For such problems, the use of variationally defined curves has a rich history due to the regularity and optimal nature of the solutions. In particular, the so-called \textit{Riemannian splines} \cite{noakes} are a particularly ubiquitous choice in interpolant, which themselves are composed of Riemannian polynomials—satisfying boundary conditions in positions, velocities, and potentially higher-order derivatives—that are glued together. In Euclidean spaces, Riemannian splines are just cubic splines. That is, the minimizers of the total squared acceleration.

Riemannian polynomials are smooth and optimal in the sense that they minimize the average square magnitude of some higher-order derivative along the curve. This quantity is often related to the magnitude of the controller in control engineering applications (which itself is related to energy consumption). Moreover, Riemannian polynomials carry a rich geometry with them, which has been studied extensively in the literature (see \cite{Giambo, marg, noakes} for a detailed account of Riemannian cubics and \cite{RiemannianPoly} for some results with higher-order Riemannian polynomials).

It is often the case that—in addition to interpolating points—there are obstacles or regions in space that need to be avoided. In this case, a typical strategy is to augment the action functional with an artificial potential term that grows large near the obstacles and small away from them (in that sense, the trajectories that minimize the action are expected to avoid the obstacles). This was done for instance in \cite{BlCaCoCDC}, \cite{BlCaCoIJC}, \cite{CoGo20}, \cite{colombo2023existence} where necessary conditions for extrema in obstacle avoidance problems on Riemannian manifolds were derived. In addition to applications to interpolation problems on manifolds and to energy-minimum problems on Lie groups and symmetric spaces endowed with a bi-invariant metric \cite{point}, and extended in \cite{mishal}, \cite{sh} and \cite{goodman2022collision} for the collision avoidance task and hybrid systems in \cite{goodman2021obstacle}. Reduction of necessary conditions for the obstacle avoidance problem were studied in \cite{goodman2022reduction} and sufficient conditions for the problem were studied in \cite{goodman2022sufficient}. In this paper, we build on the previous studies by first proving the converse result to Proposition 2 in \cite{goodman2022sufficient}, and then considering the problem of reduction by a Lie group of symmetries sufficient conditions for optimality in the variational obstacle problem on Lie groups endowed with a left-invariant metric, where a set of equivalent sufficient conditions are found in terms of the invertibility of a matrix whose elements are given by solutions to certain initial value problems. Finally, a brief study of the obstacle avoidance application is considered.

\section{Background on Riemannian manifolds}\label{Sec: background}

Let $(Q, \left< \cdot, \cdot\right>)$ be an $n$-dimensional \textit{Riemannian
manifold}, where $Q$ is an $n$-dimensional smooth manifold and $\left< \cdot, \cdot \right>$ is a positive-definite symmetric covariant 2-tensor field called the \textit{Riemannian metric}. That is, to each point $q\in Q$ we assign a positive-definite inner product $\left<\cdot, \cdot\right>_q:T_qQ\times T_qQ\to\mathbb{R}$, where $T_qQ$ is the \textit{tangent space} of $Q$ at $q$ and $\left<\cdot, \cdot\right>_q$ varies smoothly with respect to $q$. The length of a tangent vector is determined by its norm, defined by
$\|v_q\|=\left<v_q,v_q\right>^{1/2}$ with $v_q\in T_qQ$. For any $p \in Q$, the Riemannian metric induces an invertible map $\cdot^\flat: T_p Q \to T_p^\ast Q$, called the \textit{flat map}, defined by $X^\flat(Y) = \left<X, Y\right>$ for all $X, Y \in T_p Q$. The inverse map $\cdot^\sharp: T_p^\ast Q \to T_p Q$, called the \textit{sharp map}, is similarly defined implicitly by the relation $\left<\alpha^\sharp, Y\right> = \alpha(Y)$ for all $\alpha \in T_p^\ast Q$. Let $C^{\infty}(Q)$ and $\Gamma(TQ)$ denote the spaces of smooth scalar fields and smooth vector fields on $Q$, respectively. The sharp map provides a map from $C^{\infty}(Q) \to \Gamma(TQ)$ via $\grad f(p) = df_p^\sharp$ for all $p \in Q$, where $\grad f$ is called the \textit{gradient vector field} of $f \in C^{\infty}(Q)$. More generally, given a map $V: Q \times \cdots \times Q \to \R$ (with $m$ copies of $Q$), we may consider the gradient vector field of $V$ with respect to $i^{\text{th}}$ component as $\grad_i V(q_1, \dots, q_m) = \grad U(q_i)$, where $U(q) = V(q_1, \dots, q_{i-1}, q, q_{i+1}, \dots, q_m)$ for all $q, q_1, \dots, q_m \in Q$.

Vector fields are a special case of smooth sections of vector bundles. In particular, given a vector bundle $(E, Q, \pi)$ with total space $E$, base space $Q$, and projection $\pi: E \to Q$, where $E$ and $Q$ are smooth manifolds, a \textit{smooth section} is a smooth map $X: Q \to E$ such that $\pi \circ X = \text{id}_Q$, the identity function on $Q$. We similarly denote the space of smooth sections on $(E, Q, \pi)$ by $\Gamma(E)$. A \textit{connection} on $(E, Q, \pi)$ is a map $\nabla: \Gamma(TQ) \times \Gamma(E) \to \Gamma(TQ)$ which is $C^{\infty}(Q)$-linear in the first argument, $\R$-linear in the second argument, and satisfies the product rule $\nabla_X (fY) = X(f) Y + f \nabla_X Y$ for all $f \in C^{\infty}(Q), \ X \in \Gamma(TQ), \ Y \in \Gamma(E)$. The connection plays a role similar to that of the directional derivative in classical real analysis. The operator
$\nabla_{X}$ which assigns to every smooth section $Y$ the vector field $\nabla_{X}Y$ is called the \textit{covariant derivative} (of $Y$) \textit{with respect to $X$}.

Connections induces a number of important structures on $Q$, a particularly ubiquitous such structure is the \textit{curvature endomorphism}, which is a map $R: \Gamma(TQ) \times \Gamma(TQ) \times \Gamma(E) \to \Gamma(TQ)$ defined by $R(X,Y)Z := \nabla_{X}\nabla_{Y}Z-\nabla_{Y}\nabla_{X}Z-\nabla_{[X,Y]}Z$ for all $X, Y \in \Gamma(TQ), \ Z \in \Gamma(E)$.  The curvature endomorphism measures the extent to which covariant derivatives commute with one another. 

We now specialize our attention to \textit{affine connections}, which are connections on $TQ$. Let $q: I \to Q$ be a smooth curve parameterized by $t \in I \subset \R$, and denote the set of smooth vector fields along $q$ by $\Gamma(q)$. Then for any affine connection $\nabla$ on $Q$, there exists a unique operator $D_t: \Gamma(q) \to \Gamma(q)$ (called the \textit{covariant derivative along $q$}) which agrees with the covariant derivative $\nabla_{\dot{q}}\tilde{W}$ for any extension $\tilde{W}$ of $W$ to $Q$. A vector field $X \in \Gamma(q)$ is said to be \textit{parallel along $q$} if $\displaystyle{D_t X\equiv 0}$. 

The covariant derivative allows to define a particularly important family of smooth curves on $Q$ called \textit{geodesics}, which are defined as the smooth curves $\gamma$ satisfying $D_t \dot{\gamma} = 0$. Moreover, geodesics induce a map $\mathrm{exp}_q:T_qQ\to Q$ called the \textit{exponential map} defined by $\mathrm{exp}_q(v) = \gamma(1)$, where $\gamma$ is the unique geodesic verifying $\gamma(0) = q$ and $\dot{\gamma}(0) = v$. In particular, $\mathrm{exp}_q$ is a diffeomorphism from some star-shaped neighborhood of $0 \in T_q Q$ to a convex open neighborhood $\mathcal{B}$ (called a \textit{goedesically convex neighborhood}) of $q \in Q$. It is well-known that the Riemannian metric induces a unique torsion-free and metric compatible connection called the \textit{Riemannian connection}, or the \textit{Levi-Civita connection}. Along the remainder of this paper, we will assume that $\nabla$ is the Riemannian connection. For additional information on connections and curvature, we refer the reader to \cite{Boothby}. When the covariant derivative $D_t$ corresponds to the Levi-Civita connection, geodesics can also be characterized as the critical points of the length functional $\displaystyle{L(\gamma) = \int_0^1 \|\dot{\gamma}\|dt}$ among all unit-speed \textit{piece-wise regular} curves $\gamma: [a, b] \to Q$ (that is, where there exists a subdivision of $[a, b]$ such that $\gamma$ is smooth and satisfies $\dot{\gamma} \ne 0$ on each subdivision). 

If we assume that $Q$ is \textit{complete} (that is, $(Q, d)$ is a complete metric space), then by the Hopf-Rinow theorem, any two points $x$ and $y$ in $Q$ can be connected by a (not necessarily unique) minimal-length geodesic $\gamma_{x,y}$. In this case, the Riemannian distance between $x$ and $y$ can be defined by $\displaystyle{d(x,y)=\int_{0}^{1}\Big{\|}\frac{d \gamma_{x,y}}{d s}(s)\Big{\|}\, ds}$. Moreover, if $y$ is contained in a geodesically convex neighborhood of $x$, we can write the Riemannian distance by means of the Riemannian exponential as $d(x,y)=\|\mbox{exp}_x^{-1}y\|.$ 

\subsection{Admissible Path Space}

The \textit{Lebesgue space} $L^p([0,1];\mathbb{R}^n)$, $p\in(1,+\infty)$ is the space of $\mathbb{R}^n$-valued functions on $[0,1]$ such that each of their components is $p$-integrable, that is, whose integral of the absolute value raised to the power of $p$ is finite. 
A sequence $(f_n)$ of functions in $L^p([0,1];\mathbb{R}^n)$ is said to be \textit{weakly convergent} to $f$ if for every $g\in L^r([0,1];\mathbb{R}^n)$, with  $\frac{1}{p}+\frac{1}{r}=1$, and every component $i$,  $\displaystyle{\lim_{n\to\infty}\int_{[0,1]}f_n^i g^i=\int_{[0,1]}f^i g^i}$. A function $g\colon[0,1]\to \mathbb{R}^n$ is said to be the \textit{weak derivative} of $f\colon[0,1]\to \mathbb{R}^n$ if for every component $i$ of $f$ and $g$, and for every compactly supported $\mathcal{C}^\infty$ real-valued function $\varphi$ on $[0,1]$, $\displaystyle{\int_{[0,1]}f^i\varphi'=-\int_{[0,1]}g^i\varphi}$.  The \textit{Sobolev space} $W^{k,p}([0,1];\mathbb{R}^n)$ is the space of functions $u\in L^p([0,1];\mathbb{R}^n)$ such that for every $\alpha\leq k$, the $\alpha^{th}$ weak derivative $\frac{d^\alpha u}{dt^{\alpha}}$ of $u$ exists and $\frac{d^\alpha u}{dt^{\alpha}}\in L^p([0,1];\mathbb{R}^n)$. In particular, $H^k([0,1];\mathbb{R}^n)$ denotes the Sobolev space $W^{k,2}([0,1];\mathbb{R}^n)$, and its norm may be expressed as $\displaystyle{\left|\left|f \right| \right| = \left(\int_{[0,1]} \sum_{p=0}^k \left| \left|\frac{d^k}{dt^k}f(t)\right|\right|_{\mathbb{R}^n}^2 dt \right)^{1/2}}$ for all $f \in H^k([0,1];\mathbb{R}^n)$, where $|| \cdot ||_{\mathbb{R}^n}$ denotes the Euclidean norm on $\mathbb{R}^n$. $(f_n)\subset W^{k,p}([0,1];\mathbb{R}^n)$ is said to be \textit{weakly convergent} to $f$ in $W^{k,p}([0,1];\mathbb{R}^n)$ if for every $\alpha\leq k$, $\displaystyle{\frac{d^\alpha f_n}{dt^{\alpha}}\rightharpoonup \frac{d^\alpha f}{dt^{\alpha}}}$ weakly in $L^p([0,1];\mathbb{R}^n)$.


We denote by $H^2([0,1];Q)$ the set of all curves $q\colon[0,1]\to Q$ such that for every chart $(\mathcal{U},\varphi)$ of $Q$ and every closed subinterval $I\subset[0,1]$ such that $q(I)\subset\mathcal{U}$, the restriction of the composition $\varphi\circ q|_I$ is in $H^2([0,1];\mathbb{R}^m)$. Note that $H^2([0,1]; Q)$ is an infinite-dimensional Hilbert Manifold modeled on $H^2([0,1]; \mathbb{R}^m)$, and given $\xi = (q_0,v_0), \ \eta = (q_T, v_T) \in TQ$, the space $\Omega_{\xi, \eta}^T$ (denoted simply by $\Omega$ unless otherwise necessary) defined as the space of all curves $\gamma \in H^2([0,1]; Q)$ satisfying $\gamma(0) = q_0, \ \gamma(T) = q_T, \ \dot{\gamma}(0) = v_0, \ \dot{\gamma}(T) = v_T$ is a closed submanifold of $H^2([0,1]; Q)$ 

The tangent space $T_x \Omega$ consists of vector fields along $x$ of class $H^2$ which vanish at the endpoints together with their first covariant derivatives. $T_x \Omega$ has a natural Hilbert structure given by
$$\left<X, Y\right>_{T_x \Omega} = \left( \int_a^b \sum_{j=0}^2 g(D_t^j X, D_t^j Y)\right)^{1/2}.$$
Considering an orthonormal basis of parallel vector fields $\left\{\xi_i\right\}$ along $x$ and writing $X = X^i \xi_i$ and $Y = Y^i \xi_i$ for some coordinate functions $X^i, Y^i \in \mathring{H}^2([a,b]; \R) = \{f \in H^2([a,b]; \R) \ \vert \ f(a) = f'(a) = f(b) = f'(b) = 0\}$, we find that
$$\left<X, Y\right>_{T_x \Omega} = \left(\int_a^b \sum_{i=1}^n \left[ X^i Y^i + \dot{X}^i \dot{Y}^i + \ddot{X}^i \ddot{Y}^i \right]dt\right)^{1/2},$$
from which it is clear that $T_x \Omega$ can be identified with the Sobolev space $\mathring{H}^2([a,b], \mathbb{R}^n)$ (as discussed for instance in section 4.3 of \cite{Jost}).


\subsection{Riemannian geometry on Lie Groups}\label{sec: background_Lie}

Let $G$ be a Lie group with Lie algebra $\g := T_{e} G$, where $e$ is the identity element of $G$. The left-translation map $L: G \times G \to G$ provides a group action of $G$ on itself under the relation $L_{g}h := gh$ for all $g, h \in G$. Given any inner-product $\left< \cdot, \cdot \right>_{\g}$ on $\g$, left-translation provides us with a Riemannian metric $\left< \cdot, \cdot \right>$ on $G$ via the relation:
\begin{align*}
    \left< X_g, Y_g \right> := \left< g^{-1} X_g, g^{-1} Y_g \right>_{\g},
\end{align*}
for all $g \in G, X_g, Y_g \in T_g G$. Such a Riemannian metric is called \textit{left-invariant}, and it follows immediately that there is a one-to-one correspondence between left-invariant Riemannian metrics on $G$ and inner products on $\g$, and that $L_g: G \to G$ is an isometry for all $g \in G$ by construction. Any Lie group equipped with a left-invariant metric is complete as a Riemannian manifold. In the remainder of the section, we assume that $G$ is equipped with a left-invariant Riemannian metric.

In the following $L_{g^{\ast}}$ stands for the push-forward of $L_g$, which is well-defined because $L_g: G \to G$ is a diffeomorphism for all $g \in G$. We call a vector field $X$ on $G$ \textit{left-invariant} if $L_{g\ast} X = X$ for all $g \in G$, and we denote the set of all left-invariant vector fields on $G$ by $\mathfrak{X}_L(G)$. It is well-known that the map $\phi: \g \to \mathfrak{X}_L(G)$ defined by $\phi(\xi)(g) = L_{g\ast} \xi$ for all $\xi \in \g, g \in G$ is an isomorphism between vector spaces. This isomorphism allows us to construct an operator $\nabla^{\g}: \g \times \g \to \g$ defined by:
\begin{align}
    \nabla^{\g}_{\xi} \eta := \nabla_{\phi(\xi)} \phi(\eta)(e),\label{g-connection}
\end{align}
for all $\xi, \eta \in \g$, where $\nabla$ is the Levi-Civita connection on $G$ corresponding to the left-invariant Riemannian metric $\left< \cdot, \cdot \right>$. Although $\nabla^{\g}$ is not a connection, we shall refer to it as the \textit{Riemannian $\g$-connection} corresponding to $\nabla$ because of the similar properties that it satisfies:
\begin{lemma}\label{lemma: covg_prop}
$\nabla^\g: \g \times \g \to \g$ is $\R$-bilinear, and for all $\xi, \eta, \sigma \in \g$, the following relations hold:
\begin{equation*}
 \hbox{(1) }\nabla_{\xi}^{\g} \eta - \nabla_{\eta}^{\g} \xi = \left[ \xi, \eta \right]_{\g},\, \hbox{(2) } \left< \nabla_{\sigma}^{\g} \xi, \eta \right> + \left<  \xi, \nabla_{\sigma}^{\g}\eta \right> = 0.
\end{equation*}
\end{lemma}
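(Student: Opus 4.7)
The plan is to transfer the defining properties of the Levi-Civita connection $\nabla$ on $G$ (torsion-freeness and metric compatibility) through the Lie-algebra isomorphism $\phi: \g \to \mathfrak{X}_L(G)$ to the operator $\nabla^\g$. The main tools are three elementary facts about left-invariant structures: (a) $\phi$ is an $\R$-linear isomorphism; (b) the Lie bracket of left-invariant vector fields is left-invariant and satisfies $[\phi(\xi), \phi(\eta)] = \phi([\xi,\eta]_\g)$; and (c) the inner product of two left-invariant vector fields is a constant function on $G$, because $\langle \phi(\xi)(g), \phi(\eta)(g) \rangle = \langle L_{g\ast}\xi, L_{g\ast}\eta\rangle = \langle \xi, \eta\rangle_\g$ by left-invariance of the metric.

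For $\R$-bilinearity, I would observe that $\nabla$ is $C^\infty(G)$-linear in its first slot and $\R$-linear in its second, so composition with the $\R$-linear map $\phi$ and evaluation at $e \in G$ yields $\R$-bilinearity of $\nabla^\g$ at once.

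For property (1), I would apply the torsion-free identity of the Levi-Civita connection to the left-invariant vector fields $\phi(\xi)$ and $\phi(\eta)$, giving
\[
\nabla_{\phi(\xi)}\phi(\eta) - \nabla_{\phi(\eta)}\phi(\xi) = [\phi(\xi), \phi(\eta)] = \phi([\xi,\eta]_\g).
\]
Evaluating at $e$ and using the definition of $\nabla^\g$ together with $\phi(\zeta)(e) = \zeta$ for all $\zeta \in \g$, I immediately obtain $\nabla^\g_\xi \eta - \nabla^\g_\eta \xi = [\xi,\eta]_\g$.

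For property (2), I would exploit the constancy of $g \mapsto \langle \phi(\xi)(g), \phi(\eta)(g)\rangle$. Metric compatibility of $\nabla$ yields
\[
\phi(\sigma)\langle \phi(\xi), \phi(\eta)\rangle = \langle \nabla_{\phi(\sigma)}\phi(\xi), \phi(\eta)\rangle + \langle \phi(\xi), \nabla_{\phi(\sigma)}\phi(\eta)\rangle,
\]
and the left-hand side vanishes identically on $G$ by the constancy observation. Evaluating at $e$ and using the definition of $\nabla^\g$ together with the left-invariance of $\langle \cdot, \cdot\rangle$ (so the inner product at $e$ agrees with $\langle \cdot,\cdot\rangle_\g$) gives the desired skew-symmetry relation. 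The only step requiring care is verifying the bracket and constancy identities for left-invariant vector fields; neither is subtle, but without them the reductions above collapse, so I expect this bookkeeping, rather than any genuine difficulty, to be the main thing to get right.
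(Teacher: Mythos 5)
Your proof is correct. The paper itself states this lemma without proof (it is imported from the authors' earlier work on reduction of necessary conditions), so there is no in-text argument to compare against; your route---pushing torsion-freeness and metric compatibility of the Levi-Civita connection through the isomorphism $\phi$, using that $[\phi(\xi),\phi(\eta)]=\phi([\xi,\eta]_{\g})$ and that $g\mapsto\langle\phi(\xi)(g),\phi(\eta)(g)\rangle$ is constant by left-invariance, then evaluating at $e$---is the standard and expected argument, and the bookkeeping steps you flag are exactly the ones that need checking.
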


\begin{remark}\label{remark: covg_operator}
We may consider the Riemannian $\g$-connection as an operator $\nabla^\g: C^{\infty}([a, b], \g)\times C^{\infty}([a, b], \g) \to C^{\infty}([a, b], \g)$ in a natural way,  namely, if $\xi, \eta \in C^{\infty}([a, b], \g)$, we can write $(\nabla^\g_{\xi} \eta)(t) := \nabla^\g_{\xi(t)}\eta(t)$ for all $t \in [a, b]$. With this notation, Lemma \ref{lemma: covg_prop} works identically if we replace $\xi, \eta, \sigma \in \g$ with $\xi, \eta, \sigma \in C^{\infty}([a, b], \g)$.
\end{remark}

Given a basis $\{A_i\}$ of $\g$, we may write any vector field $X$ on $G$ as $X = X^i \phi(A_i)$, where $X^i: G \to \R$, where we have adopted the Einstein sum convention. If $X$ is a vector field along some smooth curve $g: [a, b] \to G$, then we may equivalently write $X = X^i g A_i$, where now $X^i: [a, b] \to \R$ and $g A_i =: L_g A_i$. We denote $\dot{X} = \dot{X}^i A_i$, which may be written in a coordinate-free fashion via $\dot{X}(t) = \frac{d}{dt}\left(L_{g(t)^{-1 \ast}}X(t) \right)$. We now wish to understand how the Levi-Civita connection $\nabla$ along a curve is related to the Riemannian $\g$-connection $\nabla^\g$. This relation is summarized in the following result \cite{goodman2022reduction}.

\begin{lemma}\label{lemma: cov-to-covg}
Consider a Lie group $G$ with Lie algebra $\g$ and left-invariant Levi-Civita connection $\nabla$. Let $g: [a,b] \to G$ be a smooth curve and $X$ a smooth vector field along $g$. Then the following relation holds for all $t \in [a, b]$:
\begin{align}
    D_t X(t) = g(t)\left(\dot{X}(t) + \nabla_{\xi}^\g \eta(t) \right).\label{eqs: Cov-to-covg}
\end{align}
\end{lemma}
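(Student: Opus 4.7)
The plan is to compute $D_tX$ in a left-trivialized basis and reduce everything to the Riemannian $\g$-connection at the identity. Fix a basis $\{A_i\}$ of $\g$ and expand $X$ along $g$ as $X(t) = X^i(t)\,g(t) A_i$ with smooth components $X^i:[a,b]\to\R$, so that $\eta(t) := X^i(t) A_i = L_{g(t)^{-1}\ast}X(t) \in \g$ is the body representation of $X$ and $\dot X(t) = \dot X^i(t) A_i$ as defined in the paragraph preceding the lemma. Similarly, write $\xi(t) := L_{g(t)^{-1}\ast}\dot g(t) = \xi^j(t) A_j \in \g$ for the body velocity.

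Next I would apply the product rule for the covariant derivative along $g$ to obtain
\begin{equation*}
D_t X = \dot X^i\, g A_i + X^i\, D_t(g A_i),
\end{equation*}
so the task reduces to computing $D_t(gA_i)$. Since $gA_i = \phi(A_i)(g)$ by definition of $\phi$, and since $\phi(A_i)\in\mathfrak{X}_L(G)$ is a global extension of this vector field along the curve, the defining property of the covariant derivative along a curve gives $D_t(gA_i) = \nabla_{\dot g(t)}\phi(A_i)$. Decomposing $\dot g(t) = \xi^j(t)\phi(A_j)(g(t))$ and using $C^\infty(G)$-linearity of $\nabla$ in the first slot, this equals $\xi^j(t)\,\nabla_{\phi(A_j)}\phi(A_i)\big|_{g(t)}$.

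The key step, which I expect to be the main technical point, is to identify $\nabla_{\phi(A_j)}\phi(A_i)$ as itself a left-invariant vector field, so that it is equal to $\phi(\nabla^{\g}_{A_j}A_i)$ by the very definition \eqref{g-connection}. This follows because the metric is left-invariant, hence each $L_g$ is an isometry, and an isometry preserves the Levi-Civita connection; applied to the left-invariant fields $\phi(A_j),\phi(A_i)$ this yields $L_{g\ast}\nabla_{\phi(A_j)}\phi(A_i) = \nabla_{\phi(A_j)}\phi(A_i)$. Evaluating at $g(t)$ gives $\nabla_{\phi(A_j)}\phi(A_i)\big|_{g(t)} = g(t)\,\nabla^{\g}_{A_j}A_i$, and hence $D_t(gA_i) = g(t)\,\xi^j(t)\,\nabla^{\g}_{A_j}A_i = g(t)\,\nabla^{\g}_{\xi(t)}A_i$ by $\R$-bilinearity.

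Substituting back and using $\R$-bilinearity of $\nabla^{\g}$ once more (in the operator sense of Remark \ref{remark: covg_operator}, treating $X^i(t)$ as scalars), I obtain
\begin{equation*}
D_t X(t) = g(t)\left(\dot X^i(t) A_i + X^i(t)\,\nabla^{\g}_{\xi(t)} A_i\right) = g(t)\left(\dot X(t) + \nabla^{\g}_{\xi(t)}\eta(t)\right),
\end{equation*}
which is the desired identity. The only real subtlety is justifying that $\nabla_{\phi(A_j)}\phi(A_i)$ is left-invariant; everything else is bookkeeping with the product rule and bilinearity.
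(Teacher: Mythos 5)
Your proof is correct. Note that the paper itself does not prove Lemma~\ref{lemma: cov-to-covg}; it is quoted from \cite{goodman2022reduction}, so there is no in-paper argument to compare against. Your route is the standard one: expand $X$ in the left-translated frame $\{gA_i\}$, apply the product rule for $D_t$, and reduce $D_t(gA_i)$ to $\nabla^{\g}$ via the fact that $\nabla_{\phi(A_j)}\phi(A_i)$ is itself left-invariant (which you correctly justify by naturality of the Levi-Civita connection under the isometries $L_g$); the final step, pulling the scalars $X^i(t)$ out of the second slot of $\nabla^{\g}$, is legitimate precisely because $\nabla^{\g}$ acts pointwise in $t$ as in Remark~\ref{remark: covg_operator}, so no extra derivative term arises there. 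The argument is complete as written.
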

\section{Sufficient conditions in the variational obstacle avoidance problem}

Consider a Riemannian manifold $Q$. The principle object of study along this chapter will be the functional $J: \Omega \to \R$ as:
\begin{equation}\label{J}
J[q]= \int_a^b \Big{(} \frac12 \left\|D_t \dot{q}(t)\right\|^2 + V(q(t))\Big{)}dt.
\end{equation}

where $V: Q \to \R$ is a smooth non-negative function called the \textit{artificial potential}. Of particular interest to us are the curves $q \in \Omega$ which minimize $J$. The critical points of $J$ can be found by considering the curves at which the differential of $J$ vanishes identically. That is, by finding the curves $q \in \Omega$ such that  $dJ[q]X = 0$ for all $X \in T_q \Omega$. Such a strategy (together with a boostrapping method for the purposes of regularity) were applied \cite{Goodmanthesis} to obtain the following result:

\begin{proposition}\label{Prop: Necessary conditions} A curve $q \in \Omega$ is a critical point of the functional $J$ if and only if it is smooth on $[a, b]$ and satisfies:
\begin{equation}\label{eq: necessary}
    D_t^3 \dot{q} + R\left(D_t \dot{q}, \dot{q} \right)\dot{q}=- \hbox{\grad} \, V(q(t)).
\end{equation}
\end{proposition}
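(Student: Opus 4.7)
The plan is to compute the first variation $dJ[q]X$ for admissible variations and then use the fundamental lemma of the calculus of variations together with a regularity bootstrapping argument. First, I will take a smooth variation $q_s(t)$ of $q$ with variation field $X = \partial_s q_s|_{s=0} \in T_q \Omega$, so that $X$ and $D_t X$ vanish at the endpoints $a$ and $b$. Using that the Levi-Civita connection is torsion-free (so $D_s \dot{q}_s = D_t \partial_s q_s$) and the definition of the curvature endomorphism to commute $D_s$ and $D_t$, one obtains
\begin{equation*}
D_s D_t \dot{q}_s \big|_{s=0} = D_t^2 X + R(X, \dot{q})\dot{q}.
\end{equation*}
Substituting this into the derivative of $J[q_s]$ at $s=0$ and differentiating $V$ along the variation gives
\begin{equation*}
dJ[q]X = \int_a^b \left[ \langle D_t^2 X + R(X, \dot{q})\dot{q},\, D_t \dot{q}\rangle + \langle \grad V(q), X\rangle \right] dt.
\end{equation*}

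Next I will integrate by parts twice on the term $\langle D_t^2 X, D_t \dot{q}\rangle$, invoking metric compatibility of $\nabla$. The boundary terms vanish because both $X$ and $D_t X$ vanish at $a$ and $b$, producing $\int_a^b \langle X, D_t^3 \dot{q}\rangle \, dt$. For the curvature term I will use the standard symmetry $\langle R(X, \dot{q})\dot{q}, D_t \dot{q}\rangle = \langle R(D_t \dot{q}, \dot{q})\dot{q}, X\rangle$. Assembling everything yields
\begin{equation*}
dJ[q]X = \int_a^b \left\langle D_t^3 \dot{q} + R(D_t \dot{q}, \dot{q})\dot{q} + \grad V(q),\, X \right\rangle dt,
\end{equation*}
and the sufficiency direction (if $q$ is smooth and satisfies \eqref{eq: necessary}, then $dJ[q]=0$) is then immediate.

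For the necessity direction, the main obstacle is that a priori $q$ is only of class $H^2$, so $D_t^3 \dot{q}$ need not make classical sense, and the computation above has to be justified weakly. The plan is to localize in a geodesically convex coordinate chart, express $X$ in terms of a parallel orthonormal frame $\{\xi_i\}$ along $q$ with coordinate functions $X^i \in \mathring{H}^2([a,b];\R)$, and read the vanishing of $dJ[q]X$ for every compactly supported smooth $X^i$ as a statement that a certain $L^2$-distribution against the $X^i$'s vanishes. By the Du Bois-Reymond / fundamental lemma of the calculus of variations applied twice (owing to the second covariant derivative on $X$), one extracts the equation in an integrated weak form. Differentiating the identity repeatedly, one bootstraps: if $q \in H^k$, then the right-hand side of the integrated identity is in $H^{k-1}$, forcing $q \in H^{k+1}$, and iterating gives $q \in C^\infty$. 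Once smoothness is secured, the fundamental lemma applied to the pointwise integrand furnishes \eqref{eq: necessary} in the classical sense. I expect the bookkeeping in this bootstrap, rather than the variational computation itself, to be the technically delicate part; the author cites \cite{Goodmanthesis} for this step.
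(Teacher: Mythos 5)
Your proposal is correct and follows essentially the same route the paper indicates: computing $dJ[q]X$ via the torsion-free and curvature identities, integrating by parts, and invoking the fundamental lemma together with a regularity bootstrap for the necessity direction, which is exactly the strategy the paper attributes to the cited thesis \cite{Goodmanthesis}. Your explicit acknowledgement that the naive double integration by parts is only legitimate in the sufficiency direction, and that necessity requires the weak (du Bois--Reymond) formulation before bootstrapping, correctly identifies the one delicate point.
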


We call smooth solutions to \eqref{eq: necessary} \textit{modified Riemannian cubics}. Now the problem remains to classify these critical points. In particular, we would like to understand when a modified Riemannian cubic minimizes $J$ (at least locally). For functions whose domain is a subset of some Euclidean space, demonstrating that a critical point is a local minimum amounts to applying the second-derivative test. As we will show in Theorem \ref{thm: sufficient conditions cubics index form}, the same principle applies for our problem—we need only replace the second derivative with the second variation (or differential) along a modified Riemannian cubic. For notational consistency, we introduce the following types of minimizers:

\begin{definition}\label{def: local minimizers}
A modified Riemannian cubic $q \in \Omega$ is a:
\begin{enumerate}
    \item[\hbox{(i)}] \textit{Global minimizer} of $J$ iff $J[q] \le J[\tilde{q}]$ for all $\tilde{q} \in \Omega$.
    \item[\hbox{(ii)}] $\Omega$-local minimizer of $J$ iff $J[q] \le J[\tilde{q}]$ for all $\tilde{q}$ in some $C^1$ neighborhood of $q$ (within $\Omega$).
    \item[\hbox{(iii)}] $Q$-local minimizer of $J$ iff for any $\tau \in [a, b]$, there exists an interval $[a^\ast, b^\ast] \subset [a, b]$ containing $\tau$ such that $q\vert_{[a^\ast,b^\ast]}$ is a global minimizer of $J$ on $\Omega_{\xi, \eta}^{[a^\ast, b^\ast]}$, where $\xi = (q(a^\ast), \dot{q}(a^\ast)), \ \eta = (q(b^\ast), \dot{q}(b^\ast))$.
\end{enumerate}
\end{definition}

It should be noted that we have slightly abused our notation in the definition of a $Q$-local minimizer. Technically, we are concerned with minimizing the integral $\displaystyle{\int_{a^\ast}^{b^\ast} \big{(}\frac12\left\|D_t\right\|^2 + V(q(t))\big{)}dt}$, which has different limits of integration than $J$ as defined in equation \eqref{J}. We will continue to refer to integrals of this form by $J$ throughout the paper—and in every case, the limits of integration will match that of the boundary conditions defined by the admissible set $\Omega_{\xi, \eta}^{[a, b]}$ on which $J$ is being discussed.

The $Q$-local minimizers were classified in their entirety in \cite{goodman2022sufficient}, where it was shown that a curve $q \in \Omega$ is a $Q$-local minimizers if and only if it is a modified Riemannian cubic. This is completely analogous to the fact that geodesics are the locally length-minimizing curves on a Riemannian manifold. In the next subsection, we will discuss the known results for $\Omega$-local minimizers. As we will see, these results are not nearly as complete: while sufficient conditions for optimality are obtained, they turn out to be quite difficult to work with in practice. The principal aim of Section \ref{section: reduction suff conds Lie groups} will then be to reduce these condition by symmetry so that they may be more readily studied in some special cases of interest.

\subsection{\texorpdfstring{$\Omega$}{Lg}-local minimizers}\label{subsection: omega-local}
We define the \textit{index form} $I: T_q \Omega \times T_q \Omega \to \R$ associated to the modified Riemannian cubic $q \in \Omega$ as $I(X, Y) = d^2 J[q](X, Y)$ for all $X, Y \in T_q \Omega$, where we've considered the second differential of $J$ as a bilinear map $d^2 J: T_q \Omega \times T_q \Omega \to \R$ via the identification $T_X (T_q \Omega) \cong T_q \Omega$. The "second-derivative test" from classical calculus then takes the following form with respect to the index form:

\begin{theorem}\label{thm: sufficient conditions cubics index form} 
Suppose that $q \in \Omega$ is a modified Riemannian cubic. If $I(X, X) > 0$ for all $X \in T_q \Omega \setminus \{0\}$, then $q$ is an $\Omega$-local minimizer of $J.$ 
\end{theorem}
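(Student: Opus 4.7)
My approach is the infinite-dimensional analogue of the classical second-derivative test, carried out on the Hilbert manifold $\Omega$. Since $q$ is a modified Riemannian cubic, Proposition~\ref{Prop: Necessary conditions} gives $dJ[q]=0$, and $J$ is of class $C^2$ in a neighborhood of $q$. I would first introduce a chart around $q$ by lifting the Riemannian exponential pointwise: the map $\Phi_q(X)(t):=\exp_{q(t)}(X(t))$ sends a neighborhood of $0\in T_q\Omega$ diffeomorphically onto a $C^1$-neighborhood of $q$ in $\Omega$, and Taylor's theorem yields
\[
J[\Phi_q(X)] - J[q] \;=\; \tfrac12\, I(X,X) + R(X), \qquad R(X)=o\!\left(\|X\|_{T_q\Omega}^2\right).
\]
To conclude $\Omega$-local minimality it therefore suffices to establish the coercivity estimate $I(X,X)\ge c\,\|X\|_{T_q\Omega}^2$ for some $c>0$. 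This, not the Taylor expansion, is the real content of the theorem, since in infinite dimensions pointwise positivity of $I$ does not by itself yield a uniform lower bound: the unit sphere of $T_q\Omega$ is not compact.

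To obtain coercivity I would expand the second variation of $J$ along $q$ and, after integration by parts using the vanishing of $X$ and $D_t X$ at the endpoints, decompose
\[
I(X,X) \;=\; A(X,X) + K(X,X), \qquad A(X,X):=\int_a^b \|D_t^2 X\|^2\,dt,
\]
where $K(X,X)$ collects the remaining terms built from $X$, $D_tX$, the curvature tensor contracted with $\dot q$, and the Hessian of $V$ at $q(t)$. Two structural facts must be verified: (i) after integration by parts no second covariant derivative of $X$ survives outside the leading $\|D_t^2 X\|^2$ term, so that $K$ is continuous in the $H^1$-norm of $X$; and (ii) because $X$ and $D_tX$ both vanish at the endpoints, two successive applications of Poincaré's inequality give $\|X\|_{T_q\Omega}^2\le C\,A(X,X)$, so $A$ is coercive on $T_q\Omega$. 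Combined with the compact embedding $H^2\hookrightarrow H^1$ (Rellich--Kondrachov), statement (i) implies that the symmetric operator associated to $K$ is compact.

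With this Fredholm-type decomposition in hand, the passage from strict positivity to coercivity is the standard contradiction argument. If no such $c>0$ exists, there is a sequence $\{X_n\}\subset T_q\Omega$ with $\|X_n\|_{T_q\Omega}=1$ and $I(X_n,X_n)\to 0$. By reflexivity extract a weakly convergent subsequence $X_n\rightharpoonup X_\infty$; by compactness of $K$ one has $K(X_n,X_n)\to K(X_\infty,X_\infty)$, so $A(X_n,X_n)$ stays bounded and weak lower semicontinuity of $A$ gives $I(X_\infty,X_\infty)\le 0$. If $X_\infty\ne 0$ this contradicts the hypothesis, and if $X_\infty=0$ then $K(X_n,X_n)\to 0$, hence $A(X_n,X_n)\to 0$, and coercivity of $A$ forces $X_n\to 0$ strongly, contradicting $\|X_n\|=1$. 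Coercivity of $I$ combined with the Taylor expansion then yields $J[\Phi_q(X)]-J[q]\ge \frac{c}{2}\|X\|^2_{T_q\Omega} - o(\|X\|^2_{T_q\Omega})>0$ for all sufficiently small $X\ne 0$, so $q$ is an $\Omega$-local minimizer. The main obstacle is establishing the clean splitting $I=A+K$ with $K$ compact; everything else — the chart $\Phi_q$, the $C^2$ Taylor remainder, and the weak-compactness extraction — is routine once this structural fact is in place.
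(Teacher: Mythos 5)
Your proposal is correct in substance but takes a genuinely different---and more careful---route than the paper. The paper's proof is the one-dimensional second-derivative test applied variation by variation: for each admissible variation $q_s$ with variational field $X$, the function $f(s)=J[q_s]$ satisfies $f'(0)=0$ and $f''(0)=I(X,X)>0$, hence has a local minimum at $s=0$, and the paper concludes by quantifying over all variations. You instead work directly on the Hilbert manifold: a chart via the pointwise exponential, a second-order Taylor expansion, and an upgrade of pointwise positivity to coercivity $I(X,X)\ge c\|X\|_{T_q\Omega}^2$ through the splitting $I=A+K$, with $A(X,X)=\int_a^b\|D_t^2X\|^2\,dt$ coercive (two Poincar\'e inequalities, using $X(a)=D_tX(a)=0$) and $K$ compact (Rellich, once $K$ is seen to be $H^1$-continuous). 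What your approach buys is exactly the uniformity the paper's argument does not supply: a local minimum along every one-parameter variation does not by itself give a local minimum of $J$, since the interval $(-\epsilon,\epsilon)$ depends on the variation and the unit sphere of $T_q\Omega$ is not compact; your coercivity step is the real content and closes this gap. The structural fact you flag as the main obstacle does hold here: the only occurrence of $D_t^2X$ in $F(X,\dot q)$ is $2R(D_t^2X,\dot q)\dot q$, and by the curvature symmetries its pairing with $X$ equals $\left\langle R(X,\dot q)\dot q,\,D_t^2X\right\rangle$, which integrates by parts (the boundary terms vanish because $D_tX$ does) into an expression bilinear in $(X,D_tX)$. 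One caveat, which your proof shares with---and in fact handles better than---the paper's: the Taylor-plus-coercivity argument yields $J[\tilde q]\ge J[q]$ for $\tilde q$ in an $H^2$-neighborhood of $q$, whereas Definition~\ref{def: local minimizers}(ii) asks for a $C^1$-neighborhood; since $H^2$-balls are strictly smaller than $C^1$-balls, matching the stated definition would require an additional Weierstrass-type argument exploiting the convexity of the integrand in $D_t^2q$.
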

\begin{proof}
First, suppose that $I(X, X) > 0$ for all $X \in T_q \Omega$. For some $\epsilon > 0$, consider an admissible variation $q_s$ of $q$ with variational vector field $\partial_s q_s \vert_{s=0} = X$, where $s \in (-\epsilon, \epsilon)$. Let $f(s) := J[q_s]$, and observe that $f'(0) = 0$ since $q$ is a modified Riemannian cubic. Moreover, $f''(0) = I(X, X) > 0$, so that by the second-derivative test, $f$ has a local minimum at $s=0$. It follows that $J[q] \le J[q_s]$ for all $s \in (-\epsilon, \epsilon).$ Since this holds for all admissible variations, it follows that $q$ is an $\Omega$-local minimizer. 
\end{proof}

In \cite{goodman2022sufficient}, the following explicit expression for the index form was obtained:

\begin{proposition}\label{prop: index form}
Let $q \in \Omega$ be a modified Riemannian cubic. Then the index form along $q$ is given by
\begin{equation}\label{eq: index form}
    I(X, Y) = \int_a^b \left[\left<D_t^2 X, D_t^2 Y \right> + \left<F(X, \dot{q}) + \nabla_X \grad V, Y \right>\right]dt,
\end{equation}
for all $X, Y \in T_q \Omega$, where 
\small{\begin{align}
    F(X, Y) &= (\nabla^2_Y R)(X, Y)Y + (\nabla_X R)(\nabla_Y Y, Y)Y + R(R(X, Y)Y, Y)Y \nonumber\\&+ R(X, \nabla^2_Y Y)Y + 4R(\nabla_Y X, Y)\nabla_Y Y\label{eq: F}\\
    &+ 2\left[(\nabla_Y R)(\nabla_Y X, Y)Y + (\nabla_Y R)(X, \nabla_Y Y)Y + R(\nabla^2_Y X, Y)Y \right]\nonumber \\
    &+ 3\left[ (\nabla_Y R)(X, Y)\nabla_Y Y + R(X, Y) \nabla_Y^2 Y + R(X, \nabla_Y Y)\nabla_Y Y \right]  \nonumber.
\end{align}}
\end{proposition}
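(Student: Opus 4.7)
The plan is to compute the second differential $d^2J[q](X,Y) = I(X,Y)$ directly by introducing a smooth two-parameter admissible variation $q_{s,u}: (-\epsilon,\epsilon)^2 \times [a,b] \to Q$ with $\partial_s q_{s,u}|_{0,0} = X$, $\partial_u q_{s,u}|_{0,0} = Y$, each $q_{s,u}\in\Omega$, and evaluating $\partial_s\partial_u J[q_{s,u}]|_{s=u=0}$. Because $q$ is a critical point, this bilinear form descends to a well-defined symmetric bilinear form on $T_q\Omega\times T_q\Omega$ that does not depend on which variation realizes $(X,Y)$, so I am free to choose one with $D_s\partial_u q|_{0,0} = 0$ in order to lighten the bookkeeping.

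The first step is to compute the first variation before integrating by parts. Using the torsion-free identity $D_s\partial_t = D_t\partial_s$ on the variation surface together with the curvature commutator $D_uD_tW = D_tD_uW + R(\partial_u q,\dot q)W$ applied to $W = \dot q_{s,u}$, one obtains
$$\partial_u J[q_{s,u}] = \int_a^b \Big{[}\langle D_t^2\partial_u q, D_t\dot q\rangle + \langle R(\partial_u q,\dot q)\dot q, D_t\dot q\rangle + \langle \grad V,\partial_u q\rangle\Big{]}dt.$$
The second step is to apply $\partial_s = D_s$ (acting on scalar pairings via metric compatibility) term by term, without integrating by parts. Each time $D_s$ crosses a $D_t$ one picks up the curvature term $R(\partial_s q,\dot q)$, each time $D_s$ acts on the tensor $R$ one picks up the covariant derivative $(\nabla_{\partial_s q}R)$ together with product-rule terms on its three slots, and $D_s$ acting on $\grad V$ produces $\nabla_{\partial_s q}\grad V$. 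Evaluating at $s=u=0$ and specializing to $X$, $Y$ yields a sum of intrinsic terms in $X$, $Y$ and $\dot q$, plus residual terms proportional to $D_s\partial_u q|_{0,0}$, which either vanish by construction or, after integration by parts, combine into $\langle D_t^3\dot q + R(D_t\dot q,\dot q)\dot q + \grad V,\,\cdot\,\rangle$ and therefore vanish by Proposition \ref{Prop: Necessary conditions}.

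The third step is to integrate by parts in $t$, using that $X$, $Y$ and their first covariant derivatives vanish at $a$ and $b$, to transform the pure acceleration contribution into $\int_a^b\langle D_t^2X, D_t^2Y\rangle\, dt$ and to move all remaining covariant derivatives off $Y$ so that the curvature-rich piece of the integrand appears as a single pairing $\langle F(X,\dot q) + \nabla_X\grad V,\, Y\rangle$. Each summand in the definition \eqref{eq: F} of $F$ then has a traceable origin: the iterated derivatives $\nabla^2_{\dot q}R$ and $\nabla_{\dot q}R$ arise from the product rule acting twice on $R$ together with integrating a $D_t$ off of $R(\cdot,\dot q)\dot q$; the term $R(R(X,\dot q)\dot q,\dot q)\dot q$ arises from commuting $D_s$ past a $D_t$ already sitting inside an $R$; and the integer coefficients $2$, $3$, $4$ emerge from grouping algebraically equivalent contributions via the curvature symmetries $R(A,B) = -R(B,A)$, $\langle R(A,B)C,D\rangle = \langle R(C,D)A,B\rangle$, and the second Bianchi identity.

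The main obstacle is neither conceptual nor analytic but combinatorial: after the raw computation in Step 2 one is confronted with well over a dozen curvature terms, few of which individually match any summand of \eqref{eq: F}, and reaching the stated compact form requires a disciplined use of the curvature symmetries and Bianchi to collapse them into the nine families displayed. A secondary but necessary check is that, prior to this algebraic reorganization, the modified Riemannian cubic equation is invoked to discard every residual term involving $D_s\partial_u q|_{0,0}$ or other non-tensorial by-products of the variation, so that what remains is manifestly a bilinear expression in $X$ and $Y$ and therefore equal to the genuine second differential $I(X,Y)$ of $J$ at the critical point $q$.
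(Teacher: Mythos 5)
Your outline reproduces the standard second-variation derivation of this formula — two-parameter admissible variation, commutation of $D_s$ and $D_t$ through the curvature tensor, elimination of the non-tensorial $D_s\partial_u q\vert_{0,0}$ residue by pairing it against the Euler--Lagrange expression \eqref{eq: necessary}, and integration by parts against the vanishing endpoint data of $X$, $Y$ and their first covariant derivatives — which is exactly the route of the proof the paper relies on (the paper itself gives no argument here, importing the formula from \cite{goodman2022sufficient}). The strategy is sound and no idea is missing; the only step left unexecuted is the combinatorial collapse, via the curvature symmetries and the second Bianchi identity, that produces the specific coefficients $2$, $3$, $4$ in \eqref{eq: F}, which you correctly flag as the sole remaining labor.
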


Verifying the conditions of Theorem \ref{thm: sufficient conditions cubics index form} using \eqref{eq: index form} will not be possible in general. For that reason, we turn our attention to the kernel elements of the index form:

\begin{proposition}\label{prop: kernel index form}
A vector field $X \in T_q \Omega$ belongs to the kernel of $I$ if and only if $X$ is smooth and satisfies 
\begin{equation}\label{eq: bi-Jacobi field}
    D_t^4 X + F(X, \dot{q}) + \nabla_X \grad V(q) = 0
\end{equation} for all $t \in [a, b]$. 
\end{proposition}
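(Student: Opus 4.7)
The sufficiency direction is straightforward: if $X$ is smooth and satisfies \eqref{eq: bi-Jacobi field}, then plugging into \eqref{eq: index form} and integrating by parts twice on the term $\langle D_t^2 X, D_t^2 Y\rangle$ transfers two covariant derivatives from $Y$ onto $X$. The boundary contributions $\langle D_t^3 X, Y\rangle\big|_a^b$ and $\langle D_t^2 X, D_t Y\rangle\big|_a^b$ both vanish because $Y \in T_q \Omega$ vanishes at $a$ and $b$ together with $D_t Y$. This yields $I(X,Y) = \int_a^b \langle D_t^4 X + F(X,\dot q) + \nabla_X \grad V,\, Y\rangle\, dt$, which equals zero by \eqref{eq: bi-Jacobi field}.

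The converse requires a bootstrapping argument because a priori $X$ lies only in $H^2$, so $D_t^4 X$ does not even exist distributionally against arbitrary test fields. The key idea is to transfer the zeroth-order terms onto $Y$ instead of onto $X$. Set $Z := F(X, \dot q) + \nabla_X \grad V$; inspection of \eqref{eq: F} shows $F(X, \dot q)$ depends on $X$ at worst through $D_t^2 X$, so $Z \in L^2$. Solve the second-order linear ODE $D_t^2 \alpha = Z$ along $q$ — in a parallel orthonormal frame $\{\xi_i\}$ this reduces to $\ddot\alpha^i = Z^i$, which admits solutions with $\alpha \in H^2$. Since $Y$ and $D_t Y$ both vanish at the endpoints, two integrations by parts give $\int_a^b \langle Z, Y\rangle\, dt = \int_a^b \langle \alpha, D_t^2 Y\rangle\, dt$ with no boundary terms, so the kernel condition becomes
\begin{equation*}
\int_a^b \langle D_t^2 X + \alpha,\, D_t^2 Y\rangle\, dt = 0 \qquad \text{for every } Y \in T_q \Omega.
\end{equation*}

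Next I would exploit the freedom in $Y$. Writing $W^i$ for the components of $D_t^2 X + \alpha$ in the parallel frame, the above says $W^i \in L^2([a,b])$ is $L^2$-orthogonal to $\ddot Y^i$ for every $Y^i \in \mathring H^2([a,b])$. A direct computation shows that the image of $Y^i \mapsto \ddot Y^i$ is the codimension-two subspace $\{g \in L^2 : \int_a^b g\, dt = 0,\ \int_a^b (b-t)\,g(t)\, dt = 0\}$, whose orthogonal complement in $L^2([a,b])$ is $\mathrm{span}\{1,\,t\}$. Hence each $W^i$ is affine in $t$, so there exist parallel vector fields $P_0, P_1$ along $q$ with $D_t^2 X + \alpha = P_0 + t P_1$. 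Since $\alpha \in H^2$, this forces $D_t^2 X \in H^2$, i.e., $X \in H^4$. Revisiting $Z$ with this better regularity gives $Z \in H^2$, hence $\alpha \in H^4$, hence $X \in H^6$, and iterating yields $X \in C^\infty$. Once $X$ is smooth, I reverse the initial integrations by parts and invoke the fundamental lemma of the calculus of variations to obtain \eqref{eq: bi-Jacobi field} pointwise.

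The principal obstacle is the step involving the image of $\ddot{(\cdot)}: \mathring H^2 \to L^2$: the admissible test functions $D_t^2 Y$ do not exhaust $L^2$ but lie in a codimension-two subspace, which is what forces the affine (rather than zero) ambiguity in $W$. Once this is isolated via the auxiliary field $\alpha$, the bootstrap is essentially routine, the only verification needed being that the tensorial quantity $F(X, \dot q)$ is an operator of order at most two in $X$, so that each improvement $X \in H^k$ feeds back through $Z \in H^{k-2}$ and $\alpha \in H^k$ to $X \in H^{k+2}$.
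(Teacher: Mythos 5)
Your proposal is correct. Note that the paper itself gives no proof of Proposition~\ref{prop: kernel index form} --- it is imported from \cite{goodman2022sufficient} --- so there is no in-text argument to compare against; judged on its own, your argument is the standard duBois--Reymond-type regularity proof one would expect here, and every step checks out: the sufficiency direction via two integrations by parts with vanishing boundary terms; the observation that $F(X,\dot q)$ and $\nabla_X\grad V$ involve $X$ only up to order two (the worst term in \eqref{eq: F} is $R(\nabla_Y^2 X, Y)Y$), so $Z\in L^2$ for $X\in H^2$; the identification of the image of $Y\mapsto D_t^2 Y$ on $\mathring H^2$ as the codimension-two subspace annihilated by $1$ and $b-t$, whose orthogonal complement is $\mathrm{span}\{1,t\}$, which correctly isolates the affine ambiguity $D_t^2X+\alpha=P_0+tP_1$; and the bootstrap $X\in H^k\Rightarrow Z\in H^{k-2}\Rightarrow\alpha\in H^k\Rightarrow X\in H^{k+2}$ terminating in smoothness, after which the fundamental lemma applies since $T_q\Omega$ contains all smooth fields compactly supported in $(a,b)$. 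The one point worth stating explicitly if you write this up is that the image subspace is closed in $L^2$ (it is the intersection of the kernels of two bounded functionals), so that the orthogonal-complement argument is legitimate.
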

\begin{definition}
A vector field $X$ along a modified cubic $q$ satisfying $\frac{D^4}{dt^4} X + F(X, \dot{q}) + \nabla_X \grad V(q) \equiv 0$ on $[0, T]$ is called a modified bi-Jacobi field.
\end{definition}

Observe that in the case where $V \equiv 0$, the definition of a modified bi-Jacobi Field coincides with that of a bi-Jacobi field, as defined in \cite{Sufficient2001}. Moreover, the equation describing the modified bi-Jacobi fields is linear in $X$, so that (since $V$ is smooth) the modified bi-Jacobi fields are smooth and the existence and uniqueness of solutions on $[0, T]$ given initial values $X(0), \ \frac{D}{dt}X(0), \ \frac{D^2}{dt^2}X(0), \ \frac{D^3}{dt^3}X(0)$ follows immediately (say, by moving to coordinate charts). In particular, the set of modified bi-Jacobi fields along a modified cubic polynomial $q$ forms a $4n$-dimensional vector space. Modified bi-Jacobi fields are particularly useful when paired with the concept of \textit{biconjugate points}:

\begin{definition}
Two points $t = t_1, t_2 \in [0, T]$ are said to be biconjugate along a modified cubic $q$ if there exists a non-zero modified bi-Jacobi field $X$ such that
\begin{align*}
    X(t_1) = X(t_2) = 0, \quad \text{ and }\qquad \frac{D}{dt}X(t_1) = \frac{D}{dt}X(t_2) = 0.
\end{align*}
\end{definition}

\noindent Analogous to the case of geodesics and conjugate points (\cite{Jost}, Theorem 4.3.1), or Riemannian cubic polynomials and biconjugate points (\cite{Sufficient2001}, Theorem 7.2), we have shown in \cite{goodman2022sufficient} that modified cubic polynomials do not minimize past their biconjugate points:

\begin{theorem}\label{thm: not_min}
Suppose that $q \in \Omega$ is a modified Riemannian cubic and that $a \le t_1 < t_2 \le b$ are biconjugate. Then $q$ is not an $\Omega$-local minimizer of $J$.
\end{theorem}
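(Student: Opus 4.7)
The plan is to show that the biconjugate hypothesis forces the index form $I$ to fail to be positive definite on $T_q \Omega$, which, by the contrapositive of the second-variation reasoning used in Theorem \ref{thm: sufficient conditions cubics index form}, implies $q$ cannot be an $\Omega$-local minimizer. Concretely, if $q$ were a local minimizer, then for every admissible variation $q_s$ with variational field $Z \in T_q\Omega$ the function $f(s) = J[q_s]$ would have a local minimum at $s=0$, forcing $f''(0) = I(Z,Z) \ge 0$. It therefore suffices to produce a single $Z \in T_q\Omega$ with $I(Z,Z) < 0$.

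The construction follows the classical ``broken Jacobi field'' idea. Let $J$ be a nonzero modified bi-Jacobi field witnessing that $t_1, t_2$ are biconjugate, and define
\[
X(t) = \begin{cases} J(t), & t \in [t_1, t_2], \\ 0, & \text{otherwise}. \end{cases}
\]
Since $J$ and $D_t J$ vanish at both $t_1$ and $t_2$, $X$ is of class $C^1$ on $[a,b]$ with a bounded, piecewise continuous second covariant derivative, hence $X \in H^2$ and $X \in T_q\Omega$. I would then compute $I(X, Y)$ by splitting the integral at $t_1, t_2$ (the outer subintervals contribute nothing since $X \equiv 0$ there and $F(\cdot,\dot q)$, $\nabla_{(\cdot)}\grad V(q)$ are $\mathbb{R}$-linear in their first argument) and integrating by parts twice on $[t_1, t_2]$. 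Using that $J$ solves \eqref{eq: bi-Jacobi field} the interior terms cancel and only the boundary jumps survive:
\[
I(X, Y) = \Bigl[\, \langle D_t^2 J, D_t Y\rangle - \langle D_t^3 J, Y\rangle\, \Bigr]_{t_1}^{t_2}.
\]
Taking $Y = X$ annihilates every boundary term because $X$ and $D_t X$ both vanish at $t_1, t_2$, so $I(X,X) = 0$.

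Next I would argue that $X$ is nevertheless \emph{not} in the kernel of $I$. By Proposition \ref{prop: kernel index form} kernel elements are smooth, whereas $X$ has a genuine jump in its second covariant derivative: if $D_t^2 J$ and $D_t^3 J$ both vanished at, say, $t_1$, then together with $J(t_1) = D_t J(t_1) = 0$ this would give trivial Cauchy data for the fourth-order linear modified bi-Jacobi equation, forcing $J \equiv 0$. Thus at least one of the quantities $D_t^2 J(t_i), D_t^3 J(t_i)$ ($i = 1,2$) is nonzero, and a standard bump construction produces a smooth $Y \in T_q\Omega$ prescribing $Y(t_i)$ and $D_t Y(t_i)$ so as to make the offending boundary contribution dominate and vanish the other three, giving $I(X,Y) \ne 0$. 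Setting $Z_\epsilon := X + \epsilon Y$ then yields
\[
I(Z_\epsilon, Z_\epsilon) = 2\epsilon\, I(X,Y) + \epsilon^2\, I(Y,Y),
\]
which is strictly negative for $\epsilon$ small and of opposite sign to $I(X,Y)$, completing the required obstruction to local minimality.

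The main obstacle I anticipate is the degenerate boundary case $t_1 = a$ and $t_2 = b$, in which $X = J$ is already smooth and itself a kernel direction, so all boundary jumps vanish and the perturbation above degenerates. That case would demand a separate higher-order variational analysis along $J$, or alternatively a continuity argument using the dependence of bi-Jacobi fields on initial data to slide the biconjugate pair into the interior of $[a,b]$ and reduce to the generic case. Beyond this edge case, the remaining work—verifying $X \in H^2$, carrying out the two integrations by parts across the weak breakpoints at $t_1, t_2$, and constructing the auxiliary $Y$ with prescribed $2$-jets at a single interior point—is routine.
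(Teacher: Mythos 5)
The paper does not reproduce a proof of this theorem (it defers to \cite{goodman2022sufficient}), so your attempt can only be measured against the standard ``broken field'' argument, which is indeed what you give. For a biconjugate pair with $t_1>a$ or $t_2<b$ your proof is essentially complete and correct: the concatenated field $X$ is $C^1$ with piecewise-smooth second covariant derivative, hence lies in $T_q\Omega$; the two integrations by parts against \eqref{eq: bi-Jacobi field} leave only the boundary expression you state; $I(X,X)=0$; and uniqueness for the fourth-order linear equation forces $(D_t^2J,D_t^3J)\neq(0,0)$ at any point where $(J,D_tJ)=(0,0)$, so at whichever of $t_1,t_2$ is interior you may prescribe the $1$-jet of a smooth $Y\in T_q\Omega$ to get $I(X,Y)\neq 0$ and then $I(X+\epsilon Y,X+\epsilon Y)<0$. (Note that when $t_1=a$ the boundary term at $t_1$ is inaccessible, since $Y(a)=D_tY(a)=0$ is forced; you must work at $t_2$, which your uniqueness observation does permit as long as $t_2<b$.)

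The genuine gap is the case $t_1=a$, $t_2=b$, which the statement explicitly allows ($a\le t_1<t_2\le b$) and which Corollary \ref{cor: suff_cond} relies on. There $X=J$ is a nonzero element of $\ker I$, so $I(X,\cdot)\equiv 0$ and the entire perturbation scheme collapses; a positive semidefinite second variation with nontrivial kernel does \emph{not} preclude local minimality (for the second-order analogue, a great semicircle on the round sphere has conjugate endpoints yet is a global minimizer of energy), so this case cannot be waved through. Neither of your proposed repairs closes it: ``sliding the biconjugate pair into the interior'' fails because biconjugacy is a degenerate, non-open condition --- restricting $J$ to a strictly smaller interval destroys the vanishing boundary data, and continuous dependence on initial conditions gives no nearby interior biconjugate pair; and a ``higher-order variational analysis'' along the kernel direction is not routine but a genuinely different argument (one would need to control the third and fourth variations of $J[q_s]$, or run a Morse-index/sign-change argument for the determinant appearing in Proposition \ref{prop: reduction_suff_conds}). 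As written, your proof establishes the theorem only for pairs with $(t_1,t_2)\neq(a,b)$.
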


Here we show that the converse is also true. 

That is, we would like to show that if there are no biconjugate points along a modified Riemannian cubic $q$, then $q$ is an $\Omega$-local minimizer. Before proceeding to the result, we introduce a symmetry of modified bi-Jacobi fields that will help to simplify calculations. Let $\alpha_q(X, Y) := \langle D_t^3 X, Y \rangle + 2\langle R(D_t X, \dot{q})\dot{q}, Y\rangle + \langle D_t X, D_t^2 Y\rangle + 2\langle R(X, \dot{q})D_t \dot{q}, Y \rangle$.

\begin{lemma}\label{Bijacobi symmetry}
If $X$ and $Y$ are bi-Jacobi fields along $q$ such that $X(0) = Y(0) = 0$ and $D_t X(0) = D_t Y(0) = 0$, then 
\begin{equation}\label{eq: antisymmetric index term symmetry}
    \frac12\big{(}\alpha_q(X, Y) - \alpha_q(Y, X)\big{)} = P_{-}(X, Y)
\end{equation}
\end{lemma}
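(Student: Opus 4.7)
The plan is to prove \eqref{eq: antisymmetric index term symmetry} by reducing it to a comparison of $t$-derivatives together with an initial-value check at $t = 0$. I would first compute $\tfrac{d}{dt}\bigl[\alpha_q(X, Y) - \alpha_q(Y, X)\bigr]$ by applying metric compatibility $\tfrac{d}{dt}\langle U, W\rangle = \langle D_t U, W\rangle + \langle U, D_t W\rangle$ to each of the four defining summands of $\alpha_q$. The terms that are manifestly symmetric in $(X, Y)$---for instance $\langle D_t^2 X, D_t^2 Y\rangle$ and symmetric sums of the form $\langle D_t^3 X, D_t Y\rangle + \langle D_t X, D_t^3 Y\rangle$---drop out under antisymmetrization, leaving contributions involving $\langle D_t^4 X, Y\rangle$, $\langle D_t^4 Y, X\rangle$, and covariant derivatives of $R(\cdot, \dot q)\dot q$.

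Next, I would invoke the bi-Jacobi field equation \eqref{eq: bi-Jacobi field} to replace $D_t^4 X$ by $-F(X, \dot q) - \nabla_X \grad V(q)$ and likewise for $Y$. The Hessian contribution $\langle \nabla_X \grad V, Y\rangle - \langle \nabla_Y \grad V, X\rangle$ vanishes because $\nabla \grad V$ is a symmetric $(0,2)$-tensor. The remaining antisymmetrized combination of curvature terms coming from $F$ would be reorganized using the pair-symmetry $\langle R(A, B)C, D\rangle = \langle R(C, D)A, B\rangle$, the first Bianchi identity, and the second Bianchi identity for $\nabla R$, applied term-by-term to the ten expressions in \eqref{eq: F}; this computation should produce exactly $2\,\tfrac{d}{dt}P_-(X, Y)$. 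The initial-value check is then immediate: the hypotheses $X(0) = Y(0) = 0$ and $D_t X(0) = D_t Y(0) = 0$ force every summand of $\alpha_q(X, Y) - \alpha_q(Y, X)$ at $t = 0$ to contain a vanishing factor, and the analogous vanishing must hold for $P_-(X, Y)$ by construction; integrating from $0$ to $t$ then yields the identity.

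The main obstacle will be the bookkeeping associated with the ten curvature-type terms in \eqref{eq: F}. Because these mix first and second covariant derivatives of $R$ with interior derivatives of $X$ and $\dot q$, organizing the antisymmetrization so that each Bianchi identity can be cleanly applied---and so that the residual exactly matches $P_-(X, Y)$ rather than some larger family of curvature invariants---is delicate. A secondary subtlety is that several summands of $F$ involve $\nabla_{\dot q} X$ rather than $X$ alone, so covariant derivatives must be redistributed via metric compatibility and judicious integration by parts in $t$ before the desired antisymmetry can be exploited. The symmetry of $\nabla \grad V$ is what allows the potential to decouple cleanly from the curvature portion, which is why the residual $P_-(X, Y)$ depends only on the curvature and not on $V$.
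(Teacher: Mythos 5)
Your proposal is correct and follows essentially the same route as the paper: both arguments rest on the identity expressing the antisymmetrization of $\langle D_t^4 X + F(X,\dot q), Y\rangle$ as the total derivative $D_t\left[\alpha_q(X,Y)-\alpha_q(Y,X)\right]$, combine it with the bi-Jacobi equation and the symmetry of the Hessian of $V$, and then integrate from the vanishing initial data. The only difference is one of presentation: the paper imports that differential identity wholesale from \cite{Sufficient2001}, whereas you propose to re-derive it by direct computation with metric compatibility and the Bianchi identities, which is exactly the bookkeeping the citation is meant to avoid.
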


\begin{proof}
Observe that 
\small{\begin{align*}
    \langle D_t X + F(X, \dot{q}) + \nabla_X \grad V, Y \rangle - \langle D_t Y + F(Y, \dot{q}) + \nabla_Y \grad V, X \rangle = 0
\end{align*}}
\normalsize since $X$ and $Y$ are bi-Jacobi fields along $q$. We may separate this as
\small{\begin{equation}\label{eq: potential_symmetry}
    \langle D_t X + F(X, \dot{q}), Y \rangle - \langle D_t Y + F(Y, \dot{q}), X \rangle = \langle \nabla_Y \grad V, X \rangle - \langle \nabla_X \grad V, Y \rangle.
\end{equation}}
\normalsize In \cite{Sufficient2001}, it was shown that
\small{\begin{align*}
    \langle D_t X + F(X, \dot{q}), Y \rangle - \langle D_t Y + F(Y, \dot{q}), X \rangle &= D_t\left[\alpha_q(X, Y) - \alpha_q(Y, X) \right].
\end{align*}}
\normalsize Using \eqref{eq: potential_symmetry} and integrating over $t$ from $t=a$ to $t=b$, we obtain the desired result. 
\end{proof}

\begin{theorem}\label{thm: suff_conds_biconjugate}
Suppose that $q \in \Omega$ is a modified Riemannian cubic, and $t =a$ and $t=t_0$ are not biconjugate along $q$ for each $t_0 \in(a, b].$ Then $q$ is an $\Omega$-local minimizer.
\end{theorem}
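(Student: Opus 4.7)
The plan is to apply Theorem~\ref{thm: sufficient conditions cubics index form} by showing that $I(X,X)>0$ for every nonzero $X \in T_q \Omega$. I do this by deforming the right endpoint: for $s \in (a,b]$, let $q_s := q|_{[a,s]}$, $\Omega^s := \Omega_{\xi,\eta(s)}^{[a,s]}$ with $\eta(s) := (q(s),\dot q(s))$, and let $I_s$ be the associated index form along $q_s$. If $I_s$ is positive definite for $s$ close to $a$ and remains so throughout $(a,b]$, then $I = I_b$ is positive definite and Theorem~\ref{thm: sufficient conditions cubics index form} finishes the proof.

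\emph{Short-interval positivity.} For any $X \in T_{q_s}\Omega^s$ the clamped boundary values $X(a)=X(s)=D_t X(a)=D_t X(s)=0$ give, via parallel transport to $T_{q(a)}Q$, Poincar\'e-type estimates $\int_a^s \|X\|^2\, dt \le C(s-a)^4 \int_a^s \|D_t^2 X\|^2\, dt$ and $\int_a^s \|D_t X\|^2\, dt \le C(s-a)^2 \int_a^s \|D_t^2 X\|^2\, dt$. Since $\dot q$, $\grad V$, their covariant derivatives, and the curvature tensor are uniformly bounded on the compact image of $q$, the expression $F(X,\dot q)+\nabla_X \grad V$ is linear in $X, D_t X, D_t^2 X$ with bounded coefficients, so Young's inequality together with the above bounds makes $|\int_a^s \langle F(X,\dot q)+\nabla_X \grad V, X\rangle dt|$ absorbable into $\tfrac12 \int_a^s \|D_t^2 X\|^2\, dt$ provided $s-a$ is small enough. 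Hence $I_s$ is positive definite on a right neighborhood of $a$.

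\emph{Persistence and biconjugacy.} Let $\lambda(s):=\inf\{I_s(X,X) : X \in T_{q_s}\Omega^s,\ \|X\|_{T\Omega^s}=1\}$. Pulling back by the affine reparametrization $t\mapsto a+(s-a)t$ places every $I_s$ on the fixed Hilbert space $\mathring{H}^2([0,1];\mathbb{R}^n)$; the resulting family of bounded symmetric bilinear forms varies continuously in $s$, so $\lambda$ is continuous. By the previous step, $\lambda(s)>0$ near $a$; suppose for contradiction there is a first $s_0 \in (a,b]$ with $\lambda(s_0)\le 0$. Because $I_{s_0}$ is a compact perturbation of the coercive form $\int_a^{s_0}\|D_t^2 X\|^2\, dt$ (using the compact embedding $\mathring{H}^2 \hookrightarrow \mathring{H}^1$), a weak-limit argument on a minimizing sequence produces a nonzero $X_\ast$ realizing $\lambda(s_0)=0$ and hence lying in $\ker I_{s_0}$. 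By Proposition~\ref{prop: kernel index form} applied to $[a,s_0]$, $X_\ast$ is a smooth nonzero modified bi-Jacobi field along $q$ with $X_\ast(a)=X_\ast(s_0)=0$ and $D_t X_\ast(a)=D_t X_\ast(s_0)=0$; thus $a$ and $s_0$ are biconjugate, contradicting the hypothesis. Therefore $\lambda(s)>0$ for every $s \in (a,b]$, which gives the desired positive definiteness.

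\emph{Main obstacle.} The technically delicate step is the persistence argument: the Hilbert space $T_{q_s}\Omega^s$ depends on $s$, so comparing the quadratic forms requires a careful reparametrization onto a common domain, and the attainment of a minimizer at the critical parameter $s_0$ rests on coercivity of the leading $\|D_t^2 X\|^2$ term together with weak lower semicontinuity and the compactness of the lower-order embedding. Lemma~\ref{Bijacobi symmetry} is useful here: it confirms (after integration by parts) that the Euler-Lagrange equation of such a minimizer is exactly the modified bi-Jacobi equation, without spurious boundary contributions.
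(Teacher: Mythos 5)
Your argument is correct, but it is not the paper's argument: the paper proves this theorem by the index-lemma comparison method, whereas you use the deformation-of-endpoint (Morse-index-theorem) method. Concretely, the paper takes an arbitrary $X \in T_q\Omega$, builds the basis $\{J_i\}_{i=1}^{2n}$ of bi-Jacobi fields vanishing to first order at $t=a$ (the no-biconjugate-point hypothesis is what makes $(J_i(t_0), D_t J_i(t_0))$ a frame for every $t_0 \in (a,b]$), writes $(X, D_tX) = f^i (J_i, D_t J_i)$ with time-varying coefficients, and computes $I(X,X) - I(J,J) = \int_a^b \|\dot f^i D_t J_i\|^2\,dt \ge 0$, using Lemma~\ref{Bijacobi symmetry} to cancel the antisymmetric cross terms; equality forces $X \equiv J \equiv 0$, so $I$ is positive definite and Theorem~\ref{thm: sufficient conditions cubics index form} applies. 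You instead establish short-interval positivity via Poincar\'e inequalities in a parallel frame, prove persistence of positivity of $\lambda(s)$ under continuous deformation of the right endpoint, and show that the first degenerate instant $s_0$ would yield (by coercivity of $\int\|D_t^2X\|^2$, weak lower semicontinuity, and Proposition~\ref{prop: kernel index form}) a nonzero kernel element, i.e.\ a biconjugate point, contradicting the hypothesis. Both routes are classical and both are sound here: the paper's is constructive and purely computational once Lemma~\ref{Bijacobi symmetry} is in hand, while yours trades that algebra for functional-analytic machinery (operator-norm continuity of the reparametrized forms, attainment of the infimum at $s_0$) and in exchange delivers the strictly stronger conclusion $I(X,X) \ge \lambda(b)\|X\|_{T_q\Omega}^2$, i.e.\ uniform coercivity rather than mere pointwise positivity. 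Two minor remarks: your closing appeal to Lemma~\ref{Bijacobi symmetry} is unnecessary (Proposition~\ref{prop: kernel index form} already identifies kernel elements with smooth modified bi-Jacobi fields, and $P_-$ plays no role in your scheme); and the continuity of $\lambda$ deserves the explicit observation that only the sign of the infimum is needed, which is invariant under the reparametrization isomorphism even though its value is not.
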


\begin{proof}
Suppose that $X \in H^2_g(q)$ and satisfies $X(a) = D_t X(a) = 0.$ We will show that, for the bi-Jacobi field $J$ along $q$ satisfying $J(a) = D_t J(a) = 0, \ J(b) = X(b), \ D_t J(b) = D_t X(b)$—which exists and is uniquely defined since there are no biconjugate points along $q$ (see \cite{Sufficient2001}), we have $0 = I(J, J) \le I(X, X)$ with equality if and only if $J = X.$ The result then follows from the fact that, if there are no biconjugate points, there is no non-zero bi-Jacobi field in $T_q \Omega$—and hence the equality $J = X$ cannot hold for any non-zero $X \in T_q \Omega.$

Following the strategy implemented in \cite{Sufficient2001}, let $\{v_i\}_{i = 1}^n$ be a basis for $T_{q_b} Q$ and consider the bi-Jacobi fields $\{J_i\}_{i=1}^{2n}$ defined by
\begin{align*}
    &J_i(a) = 0,  &&D_t J_i(a) = 0, \\
    &J_i(b) = v_i,  &&D_t J_i(b) = 0, \qquad \text{for} \ i=1, \dots, n\\
    &J_i(a) = 0,  &&D_t J_i(a) = 0, \\
    &J_i(b) = 0,  &&D_t J_i(b) = v_{i-n}, \qquad \text{for} \ i=n+1, \dots, 2n.
\end{align*}
Since there are no biconjugate points along $q$, these $2n$ bi-Jacobi fields are uniquely defined and linearly independent, and hence form a basis for the vector space $J_q(a)$ of bi-Jacobi fields along $q$ which vanish at $t = a$ along with their first covariant derivatives. Thus, $J = c^i J_i$ for some real numbers $c^i, \ i = 1, \dots 2n.$ Moreover, it is clear that $(J_i(t_0), D_t J_i(t_0))$ forms a basis for $T_{q(t_0)} Q \times T_{q(t_0)} Q$ for each $t_0 \in (a, b]$ since $t=a$ and $t=t_0$ are not bi-conjugate. Hence, we may write $(X(t), D_t X(t)) = \sum_{i=1}^{2n} f^i(t) (J_i(t), D_t J_i(t))$ for all $t \in [a, b]$ where $f^i \in H^2([a, b], \R)$ is such that $f^i(b) = c^i$ for all $i = 1, \dots, 2n$, and $\sum_{i=1}^{2n} f'_i(t) J_i(t) = 0$ for all $t \in [a, b].$ Observe that
\begin{align}
    D_t^2 X &= \dot{f}^i D_t J_i + f^i D_t^2 J_i, \\
    F(X, \dot{q}) &= f^i F(J_i, V) + 2 \dot{f}^i R(D_t J_i, \dot{q})\dot{q}.
\end{align}
Hence,
\small{\begin{align*}
    I(X, X) &= \int_a^b \Big{[} \| \dot{f}^i D_t J_i \|^2 + \langle \dot{f}^i D_t J_i, f^j D_t^2 J_j \rangle + \| f^i D_t^2 J_i\|^2 \\ &+ \langle f^i J_i, f^j F(J_j, \dot{q}) + f^j \nabla_{J_j} \grad V \rangle + 2\langle f^i J_i, \dot{f}^j R(D_t J_j, \dot{q})\dot{q} \rangle \Big{]}dt.
\end{align*}}
\normalsize Observe that 
\small{\begin{align*}
    \| f^i D_t^2 J_i \|^2 &= D_t \Big{[} \langle f^i D_t J_i, f^j D_t^2 J_j \rangle - \langle f^i J_i, f^j D_t^3 J_j \rangle \Big{]} \\
    &+ \langle f^i J_i, \dot{f}^j D_t^3 J_j \rangle + \langle f^i J_i, f^j D_t^4 J_j \rangle + \langle \dot{f}^i J_i, f^j D_t^3 J_j \rangle \\
    &- \langle \dot{f}^i D_t J_i, f^j D_t^2 J_j\rangle - \langle f^i D_t J_i, \dot{f}^j D_t^2 J_j \rangle
\end{align*}}
\normalsize Substituting this identity into $I(X,X)$ and making use of the fact that $J_i$ is a bi-Jacobi field for each $i = 1, \dots, 2n$, we obtain
\small{\begin{align*}
    I(X, X) &= \left\langle c^i D_t J_i(T), c^j D_t^2 J_j(T) \right\rangle - \left\langle c^i J_i(T), c^j D_t^3 J_j(T) \right\rangle \\
    &+ \int_a^b \Big{(}\|\dot{f}^i D_t J_i \|^2 + \dot{f}^i f^j \Big{[} \langle D_t^3 J_i, J_j \rangle - \langle D_t^3 J_j, J_i \rangle \\
    &+ \langle D_t J_i, D_t^2 J_j \rangle - \langle D_t J_j, D_t^2 J_i \rangle + 2 \langle R(D_t J_i, \dot{q})\dot{q}, J_j \rangle\Big{]}\Big{)}dt.
\end{align*}}
\normalsize The first line in the expansion is simply $I(J, J)$, which can be seen by integrating $I(J, J)$ twice by parts. Moreover, since we have $\dot{f}^i J_i = 0$, it follows that
\begin{align*}
    &\dot{f}^i f^j\langle R(D_t J_j, \dot{q})\dot{q}, \dot{f}^i f^j J_i \rangle = 0, \\
    &\dot{f}^i f^j\langle R(J_i, \dot{q})D_t \dot{q}, J_j \rangle = 0, \\
    &\dot{f}^i f^j\langle R(J_j, \dot{q})D_t \dot{q}, J_i\rangle = 0.
\end{align*}
Hence we may add these terms into our expansion to utilize Lemma \ref{Bijacobi symmetry}. That is,
\begin{align*}
    I(X, X) &= I(J, J) + \int_a^b \|\dot{f}^i D_t J_i \|^2dt + \dot{f}^i f^j P_{-}(J_i, J_j). 
\end{align*}
However, $P_{-}(\cdot, \cdot)$ is a tensor field, and so $\dot{f}^i f^j P_{-}(J_i, J_j) = P_{-}(\dot{f}^i J_i, f^j J_j) = 0$ since $\dot{f}^i J_i = 0$. Therefore, $I(X, X) - I(J, J) = \int_a^b \|\dot{f}^i D_t J_i \|^2 dt \ge 0.$ Moreover, if $\int_a^b \|\dot{f}^i D_t J_i \|^2 dt = 0$, it follows immediately that $\dot{f}^i D_t J_i = 0$. Since it also holds that $\dot{f}^i J_i = 0$, and $(J_i, D_t J_i)$ is a basis for $T_q Q \times T_q Q$, it must be the case that $\dot{f}^i(t) = 0$ for all $t \in [a, b]$, $i = 1, \dots, 2n$. As $f^i(b) = c^i$ for all $i = 1,\dots, 2n$, it follows that, in fact, $f^i(t) = c^i$ for all $t \in [a, b], i = 1, \dots, 2n$. Therefore, $X \equiv J$ on $[a,b]$.
\end{proof}

Theorems \ref{thm: not_min} and \ref{thm: suff_conds_biconjugate} can be combined into the following corollary:

\begin{corollary}\label{cor: suff_cond}
A modified Riemannian cubic $q \in \Omega$ is an $\Omega$-local minimizer of $J$ if and only if there are no biconjugate points along $q.$
\end{corollary}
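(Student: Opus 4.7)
The corollary combines Theorems \ref{thm: not_min} and \ref{thm: suff_conds_biconjugate} into a single characterization, so my plan is essentially a bookkeeping argument proving the two directions of the ``if and only if'' separately. No new technical machinery is required beyond what has already been established; the plan is simply to match each implication to the appropriate theorem.

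For the necessity direction (minimizer implies no biconjugate points), I would argue by contrapositive. Assume there exist $t_1, t_2 \in [a,b]$ with $t_1 < t_2$ which are biconjugate along $q$. Then Theorem \ref{thm: not_min} applies verbatim and yields that $q$ is not an $\Omega$-local minimizer of $J$. Contrapositively, if $q$ is an $\Omega$-local minimizer, no such biconjugate pair can exist.

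For the sufficiency direction (no biconjugate points implies minimizer), I would observe that the hypothesis of Theorem \ref{thm: suff_conds_biconjugate} is strictly weaker than the hypothesis of the corollary: namely, if no two points in $[a,b]$ are biconjugate along $q$, then in particular for every $t_0 \in (a,b]$ the points $t = a$ and $t = t_0$ fail to be biconjugate. Applying Theorem \ref{thm: suff_conds_biconjugate} immediately gives that $q$ is an $\Omega$-local minimizer of $J$.

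The only potential subtlety — and thus the closest thing to an ``obstacle'' — lies in confirming that the two theorems are actually logical complements with respect to the notion of biconjugate points. Theorem \ref{thm: not_min} considers general biconjugate pairs in $[a,b]$, while Theorem \ref{thm: suff_conds_biconjugate} restricts attention to pairs $(a, t_0)$; the corollary's hypothesis ``no biconjugate points along $q$'' must therefore be read as covering the general case, so that the forward direction provides the converse needed to sharpen Theorem \ref{thm: suff_conds_biconjugate} into an equivalence. Once this point is acknowledged the proof is immediate, and I would present the corollary as a short two-line consequence of the preceding two theorems.
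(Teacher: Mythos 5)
Your proposal is correct and matches the paper exactly: the paper offers no separate proof, stating only that Theorems \ref{thm: not_min} and \ref{thm: suff_conds_biconjugate} combine to give the corollary, which is precisely the two-directional bookkeeping you carry out (contrapositive of Theorem \ref{thm: not_min} for necessity, and the observation that absence of all biconjugate pairs implies absence of pairs of the form $(a,t_0)$ for sufficiency). Your noted subtlety about the mismatch between the two theorems' hypotheses is handled correctly and does not affect the argument.
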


\section{Reduction of Sufficient Conditions for variational obstacle avoidance}\label{section: reduction suff conds Lie groups}

Next, we apply reduction by symmetry to the sufficient conditions for optimality on a Lie group equipped with a left-invariant metric. In the end, this amounts to left-translating the Bi-Jacobi fields described by equation \eqref{eq: bi-Jacobi field} to the Lie algebra $\g$, and studying the corresponding bi-conjugate points so that we may apply Corollary \ref{cor: suff_cond}.

To that end, suppose that $G$ is a Lie group satisfying that it is a connected Lie group endowed with a left-invariant Riemannian metric and corresponding Levi-Civita connection $\nabla$, and $g \in \Omega$ is a modified Riemannian cubic. Let $\xi^{(0)} := g^{-1} \dot{g}$, and recursively define $\xi^{(i+1)} = \dot{\xi}^{(i)} + \nabla^\g_{\xi^{(0)}} \xi^{(i)}$ for $i = 0, 1, 2$. Consider a vector field $X \in \Gamma(TG)$. From Lemma \ref{lemma: cov-to-covg}, it is clear that if we recursively define
$\mathcal{X}^{(i+1)} := \dot{\mathcal{X}}^{(i)} + \nabla^\g_{\xi^{(0)}} \mathcal{X}^{(i)}$ with $\mathcal{X}^{(0)} := g^{-1} X$, then $g^{-1} D_t^i X = \mathcal{X}^{(i)}$ for all $i \in \N$. In particular,

\begin{equation}\label{eq: Dt4X Lie algebra}
    g^{-1} D_t^4 X = \dot{\mathcal{X}}^{(3)} + \nabla^\g_{\xi^{(0)}} \mathcal{X}^{(3)}.  
\end{equation}

\noindent We now translate each term of $F(X, \dot{g}),$ defined in \eqref{eq: F}. Since the Riemannian curvature $R$ is a tensor field, it follows that
\begin{align}
    R(R(X, \dot{g})\dot{g}, \dot{g})\dot{g} &= gR(R(\mathcal{X}^{(0)}, \xi^{(0)})\xi^{(0)}, \xi^{(0)})\xi^{(0)} \label{eq: F translation 1} \\
    R(X, D_t^2 \dot{g})\dot{g} &= gR(\mathcal{X}^{(0)}, \xi^{(2)})\xi^{(0)} \label{eq: F translation 2}\\
    R(D_t^2 X, \dot{g})\dot{g} &= gR(\mathcal{X}^{(2)}, \xi^{(0)})\xi^{(0)} \label{eq: F translation 3}\\
    R(X, \dot{g})D_t^2 \dot{g} &= gR(\mathcal{X}^{(0)}, \xi^{(0)})\xi^{(2)} \label{eq: F translation 4}\\
    R(X, D_t \dot{g})D_t \dot{g} &= gR(\mathcal{X}^{(0)}, \mathcal{X}^{(1)})\mathcal{X}^{(1)} \label{eq: F translation 5}\\
    R(D_t X, \dot{g})D_t \dot{g} &= gR(\mathcal{X}^{(1)}, \xi^{(0)})\xi^{(1)}.\label{eq: F translation 6}
\end{align}
The remaining terms of $F(X, \dot{q})$ involve covariant derivatives of the Riemannian curvature. Note that for $X, Y, Z \in \Gamma(TG)$, we have
\begin{align*}
    (D_t R)(X, Y)Z &= D_t (R(X, Y)Z) - R(D_t X, Y)Z \\&- R(X, D_t Y)Z - R(X, Y) D_t Z \\
    &= D_t (gR(\mathcal{X}, \mathcal{Y})\mathcal{Z}) - gR(\dot{\mathcal{X}} + \nabla^\g_\xi \mathcal{X}, \mathcal{Y})\mathcal{Z} \\&- gR(\mathcal{X}, \dot{\mathcal{Y}} + \nabla^\g_\xi \mathcal{Y})\mathcal{Z} - gR(\mathcal{X}, \mathcal{Y})(\dot{\mathcal{Z}} + \nabla^\g_\xi \mathcal{Z}) \\
    &= D_t (gR(\mathcal{X}, \mathcal{Y})\mathcal{Z})\\& - g\left(\frac{d}{dt} R(\mathcal{X}, \mathcal{Y})\mathcal{Z} - R(\nabla^\g_\xi \mathcal{X}, \mathcal{Y})\mathcal{Z}\right.\\& \left.- R( \mathcal{X}, \nabla^\g_\xi\mathcal{Y})\mathcal{Z} - R( \mathcal{X}, \mathcal{Y})\nabla^\g_\xi \mathcal{Z}\right). \\
\end{align*}
Moreover,
\begin{align*}
    D_t (gR(\mathcal{X}, \mathcal{Y})\mathcal{Z}) &= g\left(\frac{d}{dt} R(\mathcal{X}, \mathcal{Y})\mathcal{Z} + \nabla_\xi^\g
    \left(R(\mathcal{X}, \mathcal{Y})\mathcal{Z}\right) \right).
\end{align*}
Hence, if we let $\mathcal{X} = g^{-1} X, \ \mathcal{Y} = g^{-1} Y, \ \mathcal{Z} = g^{-1} Z$, we find that
\begin{align}
    (D_t R)(X, Y)Z &= g\left( \nabla_\xi^\g (R(\mathcal{X}, \mathcal{Y})\mathcal{Z}) - R(\nabla^\g_\xi \mathcal{X}, \mathcal{Y})\mathcal{Z}\right.\nonumber\\ &\left.- R( \mathcal{X}, \nabla^\g_\xi\mathcal{Y})\mathcal{Z} - R( \mathcal{X}, \mathcal{Y})\nabla^\g_\xi \mathcal{Z}\right).\label{Reduced_Derivative_Curvature}
\end{align}
Therefore, 
\begin{align}
    g^{-1}(D_t R)(D_t X, \dot{g})\dot{g} &= \nabla^\g_{\xi^{(0)}} \left(R(\mathcal{X}^{(1)}, \xi^{(0)})\xi^{(0)}\right)\nonumber\\ &- R(\nabla^\g_{\xi^{(0)}} \mathcal{X}^{(1)}, \xi^{(0)})\xi^{(0)} \\
    &\quad - R(\mathcal{X}^{(1)}, \nabla^\g_{\xi^{(0)}}\xi^{(0)})\xi^{(0)} \nonumber\\&- R(\mathcal{X}^{(1)}, \xi^{(0)}) \nabla^\g_{\xi^{(0)}}\xi^{(0)}, \nonumber\\
    g^{-1}(D_t R)(X, D_t \dot{g})\dot{g} &= \nabla^\g_{\xi^{(0)}} \left(R(\mathcal{X}^{(0)}, \xi^{(1)})\xi^{(0)}\right) \nonumber\\&- R(\nabla^\g_{\xi^{(0)}} \mathcal{X}^{(0)}, \xi^{(1)})\xi^{(0)} \\ 
    &- R(\mathcal{X}^{(0)}, \nabla^\g_{\xi^{(1)}}\xi^{(0)})\xi^{(0)}\nonumber\\ &- R(\mathcal{X}^{(0)}, \xi^{(1)}) \nabla^\g_{\xi^{(0)}}\xi^{(0)}, \nonumber\\
    g^{-1}(D_t R)(X, \dot{g})D_t \dot{g} &= \nabla^\g_{\xi^{(0)}} \left(R(\mathcal{X}^{(0)}, \xi^{(0)})\xi^{(1)}\right)\nonumber\\ &- R(\nabla^\g_{\xi^{(0)}} \mathcal{X}^{(0)}, \xi^{(0)})\xi^{(1)} \\ 
    &- R(\mathcal{X}^{(0)}, \nabla^\g_{\xi^{(0)}}\xi^{(0)})\xi^{(1)}\nonumber\\ &- R(\mathcal{X}^{(0)}, \xi^{(0)}) \nabla^\g_{\xi^{(0)}}\xi^{(1)} \nonumber. 
\end{align}
\normalsize By a similar argument,
\begin{align*}
    g^{-1}(D_t^2 R)(X, Y)Z &= \nabla_{\xi^{(0)}}^\g ((D_t R)(\mathcal{X}, \mathcal{Y})\mathcal{Z})\\& - (D_t R)(\nabla^\g_{\xi^{(0)}} \mathcal{X}, \mathcal{Y})\mathcal{Z}\\& - (D_t R)( \mathcal{X}, \nabla^\g_{\xi^{(0)}}\mathcal{Y})\mathcal{Z}\\& - (D_t R)( \mathcal{X}, \mathcal{Y})\nabla^\g_{\xi^{(0)}} \mathcal{Z},
\end{align*}
\normalsize which may then be used to calculate $g^{-1}(D_t^2 R)(X, \dot{g})\dot{g}$ as a function of $\mathcal{X}^{(0)}$ and $\xi^{(0)}$ and written in terms of the Riemannian curvature $R$ by applying \eqref{Reduced_Derivative_Curvature} to each term.

The remaining term of $F(X, \dot{q})$ is $(\nabla_X R)(D_t \dot{g}, \dot{g})\dot{g}$, which differs from the rest because we are now taking the covariant derivative of the curvature with respect to $X$. To address this, suppose that $\Gamma(s, t)$ is a two-parameter variation of $g$, and define $T = \partial_t \Gamma(s,t)$ and $S := \partial_s \Gamma(s,t)\big{\vert}$. Further define $\mathcal{T}(s, t) := \Gamma^{-1} T$ and $\mathcal{S}(s, t) := \Gamma^{-1} S$, and suppose $\Gamma$ is defined such that $T(0, t) = \dot{g}(t)$ and $S(0, t) = X(t).$ Then, it can be seen by applying equation \eqref{Reduced_Derivative_Curvature} that:
\begin{align*}
    (D_s R)(D_t T, T)T &= D_s (R(D_t T, T)T) - R(D_s D_t T, T)T \\&- R(D_t T, D_s T)T - R(D_t T, T)D_s T \\
    &= \Gamma\big{[}\nabla^\g_{\mathcal{S}} \big{(}R(\dot{\Tg} + \nabla^\g_{\Tg} \Tg, \Tg)\Tg\big{)}\\&- R(\nabla^\g_\Sg (\nabla^\g_\Tg + \dot{\Tg}), \Tg)\Tg \\&- R(\nabla^\g_\Tg + \dot{\Tg}, \nabla^\g_\Sg \Tg)\Tg\\& - R(\nabla^\g_\Tg + \dot{\Tg},  \Tg)\nabla^\g_\Sg\Tg \big{]}
\end{align*}
\normalsize
Setting $s = 0,$ we obtain
\begin{align}
    g^{-1}(\nabla_X R)(D_t \dot{g}, \dot{g})\dot{g} &= \nabla_{\mathcal{X}^{(0)}}(R(\xi^{(1)}, \xi^{(0)})\xi^{(0)})\nonumber\\& - R(\nabla^\g_{\mathcal{X}^{(0)}} \xi^{(1)}, \xi^{(0)})\xi^{(0)}\label{Reduced_X_Derivative_Curvature} \\
    &\quad - R( \xi^{(1)}, \nabla^\g_{\mathcal{X}^{(0)}}\xi^{(0)})\xi^{(0)} \\&- R( \xi^{(1)}, \xi^{(0)})\nabla^\g_{\mathcal{X}^{(0)}}\xi^{(0)}.\nonumber
\end{align}
\normalsize From the previous analysis, it is clear that $F(X, \dot{q})$ satisfies
\begin{equation*}
    g^{-1} F(X, \dot{q}) = \mathcal{F}(\mathcal{X}^{(0)}, \mathcal{X}^{(1)}, \mathcal{X}^{(2)}, \xi^{(0)}, \xi^{(1)}, \xi^{(2)}),
\end{equation*} for some smooth multilinear function $\mathcal{F}: \g^6 \to \g$.

Finally, we must translate $\nabla_X \grad V(g)$ to $\g$. To that end, let $\alpha$ be a variation of $g$ such that $\partial_s \alpha \vert_{s=0} = X.$ Moreover, let $\{A_i\}$ be a basis for $\mathfrak{g}$ and write $g_0 \grad_1 V_{\ext}(g, g_0) = V^i(g, g_0) gA_i$ for $V^i: G \times G \to \R$ for $i = 1, \dots, \dim(G)$, we have
\begin{small}\begin{align*}
    \nabla_X \grad V(g) &= D_s \grad V(\alpha) \big{\vert}_{s=0} \\
    &= D_s \grad_1 V_{\ext}(\alpha, g_0) \big{\vert}_{s=0} \\
    &= D_s \left(V^i(\alpha, g_0) A_i\right) \big{\vert}_{s=0} \\
    &= g\left(\frac{\partial}{\partial s} V^i(\alpha, g_0) \Big\vert_{s=0} A_i + \nabla^\g_{\mathcal{X}} g^{-1} \grad_1 V_{\ext}(g, g_0) \right).
\end{align*}\end{small}

Observe that, due to the symmetry of the extended potential,
\begin{align*}
    \langle \grad_1 V_{\ext}(g, g_0), X\rangle &= \frac{\partial}{\partial s}\Big{\vert}_{s=0} V_{\ext}(\alpha, g_0) \\&= \frac{\partial}{\partial s}\Big{\vert}_{s=0}  V_{\ext}(g^{-1}\alpha, h) \\&= \left\langle g\grad_1 V_{\ext}(e, h), X \right\rangle,
\end{align*}
so that $g^{-1}\grad_1 V_{\ext}(g, g_0) = \grad_1 V_{\ext}(e, h)$. Moreover, this implies that $V^i(\alpha, g_0) = V^i(g^{-1} \alpha, h)$, so that \begin{align*}\frac{\partial}{\partial s} V^i(\alpha(s), g_0) \Big{\vert}_{s=0} A_i &= \frac{\partial}{\partial s} V^i(g^{-1}\alpha(s), h) \Big{\vert}_{s=0} A_i \\&= d_1 V^i_{(e, h)}(\mathcal{X})A_i.\end{align*} Furthermore, this expression is independent of the chosen basis for $\g$, which allows to define a linear operator $D_{\mathcal{X}}: \g \to \g$ such that $D_{\mathcal{X}} \grad_1 V_{\ext}(e, h) = d_1 V^i_{(e, h)}(\mathcal{X})A_i$ when written with respect to any basis $\{A_i\}$ for $\g$. In all, we see that
\begin{align}
    \nabla_X \grad V(g) = g(D_{\mathcal{X}^{(0)}} + \nabla^\g_{\mathcal{X}^{(0)}}) \grad_1 V_{\ext}(e, h).
\end{align}
Hence, under the assumption that $G$ is a connected Lie group satisfying endowed with a left-invariant Riemannian metric and corresponding Levi-Civita connection $\nabla$, we have proven the following result:
\begin{theorem}\label{prop: reduction_G_left_inv_suff_cond}
Suppose that $G$ satisfies the previous assumption, and let $g \in \Omega$ solve \eqref{eq: necessary}. Define $\xi^{(i+1)} = \dot{\xi}^{(i)} + \nabla_{\xi^{(0)}}^\g \xi^{(i)}$ for $i = 0, 1, 2$, with $\xi^{(0)} := g^{-1} \dot{g}$, and let $h := g^{-1}g_0$ for some $g_0 \in G$. Then $X$ is a bi-Jacobi field along $g$ if and only if $\mathcal{X}^{(0)} := g^{-1} X$ solves:
\begin{align}
    \mathcal{X}^{(i+1)} =& \dot{\mathcal{X}}^{(i)} + \nabla^{\g}_{\xi^{(0)}} \mathcal{X}^{(i)}, \qquad \text{ for } i = 0, 1, 2, \label{eq: reduced_bi-Jacobi constraints}\\
    0=&\dot{\mathcal{X}}^{(3)} + \nabla^\g_{\xi^{(0)}} \mathcal{X}^{(3)}+ \left(D_{\mathcal{X}^{(0)}} + \nabla^\g_{\mathcal{X}^{(0)}}\right) \grad_1 V_{\ext}(e, h) \nonumber\\& + \mathcal{F}(\mathcal{X}^{(0)}, \mathcal{X}^{(1)}, \mathcal{X}^{(2)},\xi^{(0)}, \xi^{(1)}, \xi^{(2)}),\label{eq: reduced_bi-jacobi ODE}
\end{align}
We call a smooth solution $\mathcal{X}^{(0)}$ to \eqref{eq: reduced_bi-Jacobi constraints}-\eqref{eq: reduced_bi-jacobi ODE} a \textit{reduced bi-Jacobi field}.
\end{theorem}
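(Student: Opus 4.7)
The plan is to left-translate the bi-Jacobi equation \eqref{eq: bi-Jacobi field} term by term, converting each geometric object on $G$ into its Lie-algebra representation via Lemma \ref{lemma: cov-to-covg}. Since $g^{-1}\colon T_gG\to\g$ is a pointwise linear isomorphism, $X$ satisfies the bi-Jacobi equation if and only if applying $g^{-1}$ to both sides yields an identity in $\g$. The three pieces that must be handled separately are $D_t^4 X$, $F(X,\dot g)$, and $\nabla_X \grad V(g)$.

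For $D_t^4 X$ I would apply Lemma \ref{lemma: cov-to-covg} inductively. Setting $\mathcal{X}^{(0)}=g^{-1}X$, one application yields $g^{-1}D_t X = \dot{\mathcal{X}}^{(0)} + \nabla^\g_{\xi^{(0)}}\mathcal{X}^{(0)}$, which motivates the recursive definition $\mathcal{X}^{(i+1)} := \dot{\mathcal{X}}^{(i)} + \nabla^\g_{\xi^{(0)}}\mathcal{X}^{(i)}$; iterating shows $g^{-1}D_t^i X = \mathcal{X}^{(i)}$, producing the constraint equations \eqref{eq: reduced_bi-Jacobi constraints} and the leading term of \eqref{eq: reduced_bi-jacobi ODE}.

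For $F(X,\dot g)$ I would treat each defining term in \eqref{eq: F} separately. Those purely algebraic in $R$, such as $R(R(X,\dot g)\dot g,\dot g)\dot g$ or $R(X,D_t^2\dot g)\dot g$, translate immediately because $R$ is a $(1,3)$-tensor field and left-translation passes through each slot. The more delicate terms involve $(\nabla_{\dot g}R)$ and $(\nabla_{\dot g}^2 R)$; here I would combine the tensor product rule $D_t(R(X,Y)Z)=(D_tR)(X,Y)Z+R(D_tX,Y)Z+R(X,D_tY)Z+R(X,Y)D_tZ$ with Lemma \ref{lemma: cov-to-covg} applied to each factor, yielding the $\nabla^\g$-level identity \eqref{Reduced_Derivative_Curvature}, and an analogous identity for $D_t^2R$. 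Assembling everything shows that $g^{-1}F(X,\dot q)$ is a fixed smooth multilinear function $\mathcal{F}\colon\g^6\to\g$ of the tuple $(\mathcal{X}^{(0)},\mathcal{X}^{(1)},\mathcal{X}^{(2)},\xi^{(0)},\xi^{(1)},\xi^{(2)})$.

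The main obstacle is the potential term $\nabla_X \grad V(g)$, because its covariant derivative is spatial rather than temporal, so Lemma \ref{lemma: cov-to-covg} does not apply directly. I would introduce a variation $\alpha(s)$ of $g$ with $\partial_s\alpha|_{s=0}=X$, so that $\nabla_X \grad V(g) = D_s\grad V(\alpha)|_{s=0}$, reducing the problem to an ordinary covariant derivative along a curve. Expanding $\grad V(\alpha)$ in a left-invariant basis $\{A_i\}$ and using the symmetry of the extended potential $V_{\ext}$ to obtain $g^{-1}\grad_1 V_{\ext}(g,g_0) = \grad_1 V_{\ext}(e,h)$ with $h=g^{-1}g_0$, I would isolate two contributions: a $\nabla^\g_{\mathcal{X}^{(0)}}$ piece from the product rule applied to $g A_i$, and a $D_{\mathcal{X}^{(0)}}$ piece from differentiating the scalar coefficients $V^i$, where $D_{\mathcal{X}^{(0)}}$ is the basis-independent derivation determined by $D_{\mathcal{X}^{(0)}}\grad_1 V_{\ext}(e,h)=d_1V^i_{(e,h)}(\mathcal{X}^{(0)})A_i$. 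Summing the three translated pieces and equating to zero produces \eqref{eq: reduced_bi-jacobi ODE}, and since each translation step is an equivalence, this completes both directions of the stated characterization.
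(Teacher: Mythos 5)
Your overall route is the same as the paper's: invert left-translation pointwise so that the bi-Jacobi equation holds on $G$ if and only if its image under $g^{-1}$ holds in $\g$, reduce $D_t^4 X$ by iterating Lemma \ref{lemma: cov-to-covg} to get $g^{-1}D_t^i X = \mathcal{X}^{(i)}$, translate $F(X,\dot g)$ term by term using the tensoriality of $R$ together with the product rule for $D_t R$ and $D_t^2 R$, and handle $\nabla_X \grad V(g)$ with a variation $\alpha(s)$, the left-invariance of $V_{\ext}$ (giving $g^{-1}\grad_1 V_{\ext}(g,g_0)=\grad_1 V_{\ext}(e,h)$), and the operator $D_{\mathcal{X}^{(0)}}$. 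All of that matches the paper's derivation.

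There is one concrete omission. The expression \eqref{eq: F} contains the term $(\nabla_X R)(D_t\dot g,\dot g)\dot g$, in which the curvature tensor is differentiated in the direction of $X$ rather than along the curve. Your stated technique for the ``delicate'' terms---writing the covariant derivative of $R$ via the product rule for $D_t$ and then applying Lemma \ref{lemma: cov-to-covg} to each factor---does not apply to it: $\nabla_X$ is not a derivative along $g$, so it cannot be expressed through $D_t$. The paper resolves this by introducing a two-parameter variation $\Gamma(s,t)$ with $\partial_t\Gamma(0,t)=\dot g(t)$ and $\partial_s\Gamma(0,t)=X(t)$, applying the same product-rule argument in the $s$-direction to $(D_s R)(D_t T,T)T$, and setting $s=0$ to obtain \eqref{Reduced_X_Derivative_Curvature}. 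This is precisely the device you already deploy for the potential term, so the repair is immediate; but as written, your claim that ``assembling everything'' yields the multilinear map $\mathcal{F}$ glosses over the one term of $F$ for which your announced method would fail.
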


Note that, contrary to the case of modified Riemannian cubics, where the reduction process reduces the order of the governing ODE by $1$, the resulting reduced equations in Theorem \ref{prop: reduction_G_left_inv_suff_cond} are of the same order as the original equation describing bi-Jacobi fields. Ultimately, equations \eqref{eq: reduced_bi-Jacobi constraints}-\eqref{eq: reduced_bi-jacobi ODE} are simply a translation of \eqref{eq: bi-Jacobi field} to the Lie algebra $\g$. Despite not reducing the order, there are numerous advantage to this. As we will see in Proposition \ref{prop: reduction_suff_conds}, the vector space structure of $\g$ allows us to express the sufficient conditions for optimality (as outlined in Corollary \ref{cor: suff_cond}) succinctly in terms of the determinant of a matrix which depends only on the solutions to a series of initial value problems. Moreover, in many applications where $G$ admits a bi-invariant metric, the curvature tensor $R$ (and thus $\mathcal{F})$ may be calculated explicitly on $\g$, in addition to many obstacle avoidance artificial potentials, which greatly simplifies the solution of the initial value problems.

Observe that left-translation provides an isomorphism $T_g\Omega \cong \mathring{H}^2([a,b], \g)$, where $\mathring{H}^2([a,b], \g)$ denotes the space of Sobolev class $H^2$ curves $\eta: [a, b] \to \g$ such that
\begin{equation*}
    \eta(a) = \eta(b) = 0,\,\dot{\eta}(a) = -\nabla^\g_{\xi^{(0)}(a)} \eta(a),\, \dot{\eta}(b) = -\nabla^\g_{\xi^{(0)}(b)} \eta(b),
\end{equation*}
where $\xi := g^{-1} \dot{g}$. Hence, we are able to reinterpret the index form \eqref{eq: index form} along a modified Riemannian cubic $g \in \Omega$ as the bilinear form $\mathcal{I}: \mathring{H}^2([a,b], \g) \times \mathring{H}^2([a,b], \g) \to \R$ defined by:
\begin{align}
    &\mathcal{I}(\mathcal{X}^{(0)}, \mathcal{Y}^{(0)}) = \int_a^b [\left\langle \mathcal{X}^{(2)}, \mathcal{Y}^{(2)} \right\rangle\label{eq: reduced index form}\\
    & + \left\langle \mathcal{Y}^{(0)}, \ \mathcal{F}(\mathcal{X}, \xi) + \left(D_{\mathcal{X}^{(0)}} + \nabla^\g_{\mathcal{X}^{(0)}}\right)\grad_1 V_{\ext}(e, g^{-1}g_0)\right\rangle]dt,\nonumber
\end{align}
\normalsize where we have used the notation $\xi = (\xi^{(1)}, \xi^{(2)}, \xi^{(3)})$ and $\mathcal{X} = (\mathcal{X}^{(0)}, \mathcal{X}^{(1)}, \mathcal{X}^{(2)})$, and recursively defined $\xi^{(i+1)} = \dot{\xi}^{(i)} + \nabla_{\xi^{(0)}}^\g \xi^{(i)}$ and $\mathcal{X}^{(i+1)} = \dot{\mathcal{X}}^{(i)} + \nabla^{\g}_{\xi^{(0)}} \mathcal{X}^{(i)}$ for $i = 0, 1, 2$, with $\xi^{(0)} := g^{-1} \dot{g}$. We call $\mathcal{I}$ the \textit{reduced index form}. It is clear that $\mathcal{I}$ is equivalent to $I$, in the sense that for all $X, Y \in T_g \Omega$, we have that $\mathcal{X} = g^{-1} X$ and $\mathcal{Y} = g^{-1} Y$ satisfy $I(X, Y) = \mathcal{I}(\mathcal{X}, \mathcal{X})$, and vice versa. In particular, we have $\ker(\mathcal{I}) = \{g^{-1} X \in \mathring{H}^2([a, b], \g) \ | \ X \in T_g \Omega$ is a bi-Jacobi field $\}$. From Theorem \ref{prop: reduction_G_left_inv_suff_cond}, it follows that the kernel of $\mathcal{I}$ is precisely the set of reduced bi-Jacobi fields. Moreover, $t=t_0$ and $t=t_1$ are biconjugate along $g$ if and only if there exists a reduced bi-Jacobi field satisfying $\mathcal{X}^{(0)}(t_0) = \mathcal{X}^{(0)}(t_1) = \mathcal{X}^{(1)}(t_0) = \mathcal{X}^{(1)}(t_1) = 0$. This leads to the following proposition:

\begin{proposition}\label{prop: reduction_suff_conds}
Let $\{A_i\}$ be a basis for $\g$, and suppose that $\mathcal{X}_i^{(0)}$ is a reduced bi-Jacobi fields satisfying the initial conditions \text{for } $i=1,\dots, n$,
\begin{align*}
    &\mathcal{X}_i^{(0)}(a) = 0, \quad \mathcal{X}_i^{(1)}(a) = 0, \quad \mathcal{X}_i^{(2)}(a) = A_i, \quad \mathcal{X}_i^{(3)}(a) = 0, \end{align*} and 
    \begin{align*}
    &\mathcal{X}_i^{(0)}(a) = 0, \quad \mathcal{X}_i^{(1)}(a) = 0, \quad \mathcal{X}_i^{(2)}(a) = 0, \quad \  \mathcal{X}_i^{(3)}(a) = A_{i - n}
\end{align*}\text{for } $i=n+1,\dots, 2n$.

Let $\alpha_i^k, \beta_i^k \in \R$ be such that $\mathcal{X}_i^{(0)}(t) = \alpha^k_i(t) A_k$ and $\mathcal{X}_i^{(1)}(t) = \beta^k_i(t) A_k$ for $i = 1,\dots, 2n$ and define $A(t) = \begin{bmatrix} \alpha_1^1 & \dots & \alpha_{1}^{n} & \beta_{1}^1 & \dots & \beta_1^{n}\\ \vdots & \ddots & \vdots & \vdots & \ddots & \vdots \\ \alpha_{2n}^1 & \dots & \alpha_{2n}^{n} & \beta^1_{2n} & \dots & \beta^{n}_{2n}  \end{bmatrix}.$
Then $g$ is an $\Omega$-local minimizer of $J$ if and only if $\det(A(t)) \ne 0$ for all $t \in (a, b]$.
\end{proposition}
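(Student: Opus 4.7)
The plan is to combine Corollary \ref{cor: suff_cond} with the Lie-algebra reformulation from Theorem \ref{prop: reduction_G_left_inv_suff_cond} and then encode biconjugacy at a point $t_0 \in (a,b]$ as a $2n \times 2n$ linear algebra problem whose coefficient matrix is exactly $A(t_0)$.

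First, by Corollary \ref{cor: suff_cond}, $g$ is an $\Omega$-local minimizer if and only if there are no biconjugate points along $g$ in $(a,b]$. Using the identifications $X = g\mathcal{X}^{(0)}$ and $D_t^i X = g \mathcal{X}^{(i)}$ together with the invertibility of $g(t)$, the biconjugacy condition $X(a) = X(t_0) = D_t X(a) = D_t X(t_0) = 0$ is equivalent to the existence of a nonzero reduced bi-Jacobi field $\mathcal{X}^{(0)}$ with
\begin{equation*}
    \mathcal{X}^{(0)}(a) = \mathcal{X}^{(0)}(t_0) = 0, \qquad \mathcal{X}^{(1)}(a) = \mathcal{X}^{(1)}(t_0) = 0.
\end{equation*}

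Next, I would show that the $2n$ fields $\mathcal{X}_i^{(0)}$ prescribed in the statement form a basis for the linear subspace of reduced bi-Jacobi fields satisfying $\mathcal{X}^{(0)}(a) = \mathcal{X}^{(1)}(a) = 0$. The system \eqref{eq: reduced_bi-Jacobi constraints}--\eqref{eq: reduced_bi-jacobi ODE} is linear in $\mathcal{X}^{(0)}$, and its solutions are uniquely determined by the tuple $(\mathcal{X}^{(0)}(a), \mathcal{X}^{(1)}(a), \mathcal{X}^{(2)}(a), \mathcal{X}^{(3)}(a)) \in \g^{4}$; enforcing the first two to vanish leaves $\g \times \g$ as the free parameter space, and the prescribed $\mathcal{X}_i^{(0)}$ exhaust the basis of $\g \times \g$ built from $\{A_i\}$. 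Hence any such field admits a unique expansion $\mathcal{X}^{(0)} = c^i \mathcal{X}_i^{(0)}$ with $c \in \R[2n]$.

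Substituting into the vanishing conditions at $t_0$ and writing $\mathcal{X}_i^{(0)} = \alpha_i^k A_k$, $\mathcal{X}_i^{(1)} = \beta_i^k A_k$, I obtain the $2n$ scalar equations
\begin{equation*}
    \sum_{i=1}^{2n} c^i \alpha_i^k(t_0) = 0, \qquad \sum_{i=1}^{2n} c^i \beta_i^k(t_0) = 0, \qquad k = 1, \dots, n,
\end{equation*}
which assembled in matrix form read $c^{\top} A(t_0) = 0$. A nontrivial $c$ exists if and only if $\det(A(t_0)) = 0$, so the absence of biconjugate points on $(a,b]$ is equivalent to $\det(A(t)) \neq 0$ for all $t \in (a,b]$; combined with the first step this finishes the proof. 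The only mildly subtle point is that prescribing the tuple $(\mathcal{X}^{(0)}(a), \mathcal{X}^{(1)}(a), \mathcal{X}^{(2)}(a), \mathcal{X}^{(3)}(a))$ must be equivalent to prescribing the standard Cauchy data for the fourth-order linear ODE satisfied by $\mathcal{X}^{(0)}$; this follows directly from the recursion $\mathcal{X}^{(i+1)} = \dot{\mathcal{X}}^{(i)} + \nabla^{\g}_{\xi^{(0)}}\mathcal{X}^{(i)}$ evaluated at $t=a$, which yields a triangular invertible change of variables, so the standard linear ODE existence and uniqueness theorem applies. Everything else reduces to linear algebra.
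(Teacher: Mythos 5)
Your proposal is correct and follows essentially the same route as the paper: both reduce via Corollary \ref{cor: suff_cond} to the absence of biconjugate points, identify the $2n$ prescribed fields as a basis for the (reduced) bi-Jacobi fields vanishing to first order at $t=a$, and encode biconjugacy at $t_0$ as linear dependence of the rows of $A(t_0)$. Your added remark on the triangular change of variables between $(\mathcal{X}^{(0)}(a),\dots,\mathcal{X}^{(3)}(a))$ and the standard Cauchy data is a detail the paper leaves implicit, but it does not change the argument.
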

\begin{proof}
Let $X_i := g \mathcal{X}_i^{(0)}$ for $i=1,\dots, 2n$. From Theorem \ref{prop: reduction_G_left_inv_suff_cond}, it is clear that each $X_i$ is a Bi-Jacobi field. Moreover, since $\{A_i\}$ is a basis for $\g$, $\{X_i\}_{i=1}^{2n}$ is a basis for the space $J_g(0)$ of Bi-Jacobi fields along $g$ which vanish at $t =a$ along with their first covariant derivatives. Hence, for all $Z \in J_g(0)$, there exists $2n$ constants $a^i \in \R$ such that 
\begin{align*}
    Z &= \sum_{i=1}^{2n} a^i X_i = g\sum_{i=1}^{2n} a^i \mathcal{X}^{(0)}_i, \\
    D_t Z &= \sum_{i=1}^{2n} a^i D_t X_i = g\sum_{i=1}^{2n} a^i \mathcal{X}^{(1)}_i,
\end{align*}
for all $t \in [a, b]$. In particular, this implies that $$\displaystyle{(Z, D_t Z) = g \sum_{i=1}^{2n} a^i (\mathcal{X}^{(0)}_i, \mathcal{X}^{(1)}_i)},$$ where we define the left-action $G \times (\g \times \g) \to \g \times \g$ by $(g, (\xi, \eta)) \mapsto (L_{g^\ast} \xi, L_{g^\ast} \eta)$. 

Let $t_0 \in (a, b]$ and suppose that $g$ is an $\Omega$-local minimizer of $J$. Then the only bi-Jacobi field $Z \in J_g(0)$ such that $Z(t_0) = D_t Z(t_0) = 0$ is the zero vector field $Z \equiv 0$ by Corollary \ref{cor: suff_cond}. This implies that $\displaystyle{\sum_{i=1}^{2n} a^i (\mathcal{X}^{(0)}_i(t_0), \mathcal{X}^{(1)}_i(t_0))} = 0$ if and only if $a^i = 0$ for $i = 1,\dots, 2n$. It follows immediately that $\left\{\big{(}\mathcal{X}^{(0)}_i(t_0), \mathcal{X}^{(1)}_i(t_0)\big{)}\right\}_{i=1}^{2n}$ is a basis for $\g \times \g$, which implies that $\det(A(t_0)) \ne 0$. Since this holds for all $t_0 \in (a, b]$, the result holds. 

Now suppose that $g$ is not an $\Omega$-local minimizer. Then there is a time $t = t_0 \in (a, b]$ which is biconjugate to $t=a$ by Corollary \ref{cor: suff_cond}. Hence, by definition, there exists a non-trivial bi-Jacobi field $Z \in J_g(0)$ such that $Z(t_0) = D_t Z(t_0) = 0$, and so there is a non-trivial solution to $\sum_{i=1}^{2n} a^i (\mathcal{X}^{(0)}_i(t_0), \mathcal{X}^{(1)}_i(t_0))  = 0$. In particular, the vectors $(\mathcal{X}^{(0)}_i(t_0), \mathcal{X}^{(1)}_i(t_0))$ are linearly dependent, so that $\det(A(t_0)) = 0$.
\end{proof}

\subsection{The Reduced Obstacle Avoidance Problem}

Suppose that $G$ is a connected Lie group equipped with a left-invariant Riemannian metric $\langle \cdot, \cdot \rangle$. We fix some $g_0 \in G$, which we consider a point-obstacle, and choose an artificial potential of the form $V(g) = f(d^2(g, g_0)),$ where $f: \R \to \R$ is smooth and non-negative, and $d^2(g, g_0)$ refers to the square of the Riemannian distance on $G$ with respect to $\left< \cdot, \cdot \right>.$ The extended potential then takes the form $V_{\ext}(g, g_0) = f(d^2(g, g_0))$, which satisfies the required symmetries given that the Riemannian distance with respect to a left-invariant metric is itself left-invariant. That is, $d(hg, hg_0) = d(g, g_0)$ for all $g, g_0, h \in G.$ Moreover, the gradient vector field of the potential is given by $\grad_1 V_{\ext}(e, h) = f'(d^2(e, h)) \grad_1 d^2(e, h).$ This form becomes more tractable under the assumption that $h(t)$ is contained within a geodesically convex neighborhood of $e$ for all $t \in [0, T],$ as in such a case we have $d(e, h) = \|\exp_e^{-1}(h)\|,$ where $\exp$ is the Riemannian exponential map. It was shown in \cite{goodman2022reduction}that $\grad_1 d^2(e, h) = 2\exp_e^{-1}(h)$, so that $\grad_1 V_{\ext}(e, h) = 2f'(\|\exp_e^{-1}(h)\|^2)\exp_e^{-1}(h).$

This term may be simplified considerably in the case that $G$ admits a bi-invariant metric $\left< \cdot, \cdot\right>_{\Bi}.$ Suppose that $\beta: \g \to \g$ is the linear endomorphism such that $\left<\xi, \eta\right>_{\Bi} = \left< \beta(\xi), \eta\right>$ for all $\xi, \eta \in \g$. Let $\xi \in \g$, and observe that
$$\left<\grad_1 V_{\ext}(e,h), \xi \right> =  \left<\beta\left(\grad_1^{\Bi} V_{\ext}(e,h)\right), \xi \right>,$$
where $\grad^{\Bi}_1 V_{\ext}$ denotes the gradient vector field of the extended artificial potential $V$ with respect to its first component and the metric $\left< \cdot, \cdot\right>_{\Bi}.$ We now suppose that the potential and the corresponding extended potential take the form $V(g) = V_{\ext}(g, g_0) = f(d^2_{\Bi}(g, g_0))$, where now $d^2_{\Bi}: G \times G \to \R$ is the Riemannian distance with respect to $\left< \cdot, \cdot \right>_{\Bi}.$ Following the previous analysis, we find that $\grad_1 V_{\ext}(e, h) = 2f'(\|(\exp^{\Bi})_e^{-1}(h)\|^2)(\exp^{\Bi})_e^{-1}(h)$ as long as $h$ is contained in a geodesically convex neighborhood of $e$, where $\exp^{\Bi}$ is the Riemannian exponential map with respect to $\left< \cdot, \cdot \right>_{\Bi}.$ As shown in \cite{bullo2019geometric}, $\exp^{\Bi}_e = \Exp$ and $(\exp^{\Bi})^{-1}_e = \Log$, where $\Exp$ and $\Log$ are respectively the Lie exponential map and the logarithmic map on $G.$ Hence, equation \eqref{eq: reduced_bi-jacobi ODE} takes the form 

\begin{align*}
    0&=\dot{\mathcal{X}}^{(3)} + \mathcal{F}(\mathcal{X}^{(0)}, \mathcal{X}^{(1)}, \mathcal{X}^{(2)}, \xi^{(0)}, \xi^{(1)}, \xi^{(2)})\nonumber\\& + 2\left(D_{\mathcal{X}^{(0)}} + \nabla^\g_{\mathcal{X}^{(0)}}\right) \beta\left(f'(\|\Log(h)\|^2)\Log(h)\right).\label{eq: reduced_bi-jacobi ODE left-bi-inv}
\end{align*}

Observe that the remaining components of equations \eqref{eq: reduced_bi-Jacobi constraints}-\eqref{eq: reduced_bi-jacobi ODE} are still written with respect to the left-invariant metric on $G$. This situation arises naturally for rigid body motion on $\SO(3)$, as the natural metric (corresponding to the kinetic energy of a rigid body) is given by $\left< \dot{R}_1, \dot{R}_2\right> = \tr(\dot{R}_1 \mathbb{M} \dot{R}_2^T),$ where $\dot{R}_1, \dot{R}_2 \in \Gamma(G)$ and $\mathbb{M}$ is a symmetric positive-definite $3 \times 3$ matrix called the \textit{coefficient of inertia matrix}. In such a case, the metric is left-invariant, and it is bi-invariant if and only if $\mathbb{M} = I$, the $3\times 3$ identity matrix—which occurs only for perfectly symmetric rigid bodies. Hence, despite $\SO(3)$ admitting a bi-invariant metric, we are forced to use a left-invariant metric for $J$ and when defining the Levi-Civita connection and Riemannian curvature. However, we are still free to define the artificial potential $V$ with respect to the bi-invariant metric, as it is not derived from the physical situation and thus depends only on the Lie group $\SO(3).$ The principal advantage of this is that in many situations (such as $G = \SO(3)$), the logarithmic map may be calculated explicitly, whereas the exponential map with respect to the left-invariant metric typically cannot. 

In the case that our left-invariant metric is bi-invariant (that is, where $\beta$ is the identity map), we further have the identities
\begin{align*}
    \nabla_{\xi}^\g \eta = \frac12 \left[\xi, \eta\right]_{\g}, \,\,
    R\big{(}\xi, \eta{)}\sigma = -\frac14 \big{[}\big{[}\xi, \eta \big{]}_{\g}, \sigma \big{]}_\g,
\end{align*}
for all $\xi, \eta, \sigma \in \g,$ which allows for even further simplifications.

\bibliography{ifacconf}            

\begin{thebibliography}{22}
\providecommand{\natexlab}[1]{#1}
\providecommand{\url}[1]{\texttt{#1}}
\providecommand{\urlprefix}{URL }
\expandafter\ifx\csname urlstyle\endcsname\relax
  \providecommand{\doi}[1]{doi:\discretionary{}{}{}#1}\else
  \providecommand{\doi}{doi:\discretionary{}{}{}\begingroup \urlstyle{rm}\Url}\fi

\bibitem[{Assif et~al.(2018)Assif, Banavar, Bloch, Camarinha, and Colombo}]{mishal}
Assif, M., Banavar, R., Bloch, A., Camarinha, M., and Colombo, L.J. (2018).
\newblock Variational collision avoidance problems on riemannian manifolds.
\newblock \emph{Proceedings of the 2018 IEEE International Conference on Decision and Control}, 2791--2796.

\bibitem[{Bloch et~al.(2017)Bloch, Camarinha, and Colombo}]{BlCaCoCDC}
Bloch, A., Camarinha, M., and Colombo, L.J. (2017).
\newblock Variational obstacle avoidance on riemannian manifolds.
\newblock \emph{Proceedings of the 2017 IEEE International Conference on Decision and Control}, 146--150.

\bibitem[{Bloch et~al.(2021{\natexlab{a}})Bloch, Camarinha, and Colombo}]{BlCaCoIJC}
Bloch, A., Camarinha, M., and Colombo, L.J. (2021{\natexlab{a}}).
\newblock Dynamic interpolation for obstacle avoidance on riemannian manifolds.
\newblock \emph{International Journal of Control}, 94(3), 588--600.

\bibitem[{Bloch et~al.(2021{\natexlab{b}})Bloch, Camarinha, and Colombo}]{point}
Bloch, A., Camarinha, M., and Colombo, L.J. (2021{\natexlab{b}}).
\newblock Variational point-obstacle avoidance on riemannian manifolds.
\newblock \emph{Mathematics of Control, Signals, and Systems}, 33, 109--121.

\bibitem[{Boothby(2003)}]{Boothby}
Boothby, W.M. (2003).
\newblock \emph{An introduction to differentiable manifolds and Riemannian geometry, Revised}, volume 120.
\newblock Gulf Professional Publishing.

\bibitem[{Bullo and Lewis(2019)}]{bullo2019geometric}
Bullo, F. and Lewis, A.D. (2019).
\newblock \emph{Geometric control of mechanical systems: modeling, analysis, and design for simple mechanical control systems}, volume~49.
\newblock Springer.

\bibitem[{Camarinha et~al.(1995)Camarinha, Leite, and Crouch}]{marg}
Camarinha, M., Leite, F.S., and Crouch, P. (1995).
\newblock Splines of class $c^k$ on non-euclidean spaces.
\newblock \emph{IMA Journal of Mathematical Control \& Information}, 12, 299--410.

\bibitem[{Camarinha et~al.(2001)Camarinha, Leite, and Crouch}]{Sufficient2001}
Camarinha, M., Leite, F.S., and Crouch, P. (2001).
\newblock On the geometry of riemannian cubic polynomials.
\newblock \emph{Differential Geometry and its Applications}, 15, 107--135.

\bibitem[{Chandrasekaran et~al.(2020)Chandrasekaran, Colombo, Camarinha, Banavar, and Bloch}]{sh}
Chandrasekaran, R., Colombo, L.J., Camarinha, M., Banavar, R., and Bloch, A. (2020).
\newblock Variational collision and obstacle avoidance of multi-agent systems on riemannian manifolds.
\newblock \emph{Proceedings of the 2020 European Control Conference}.

\bibitem[{Colombo and Goodman(2020)}]{CoGo20}
Colombo, L. and Goodman, J. (2020).
\newblock A decentralized strategy for variational collision avoidance on complete riemannian manifolds.
\newblock \emph{Proceedings of the 2020 Portuguese Conference on Automatic Control}, 363--372.

\bibitem[{Colombo and Goodman(2023)}]{colombo2023existence}
Colombo, L. and Goodman, J. (2023).
\newblock Existence of global minimizer for elastic variational obstacle avoidance problems on riemannian manifolds.
\newblock In \emph{International Conference on Geometric Science of Information}, 81--88. Springer.

\bibitem[{Crouch and Leite(1991)}]{CLACC}
Crouch, P. and Leite, F.S. (1991).
\newblock Geometry and the dynamic interpolation problem.
\newblock \emph{Proceedings of the 1991 American Control Conference}, 16(4), 1131--1137.

\bibitem[{Crouch and Leite(1995)}]{CroSil:95}
Crouch, P. and Leite, F.S. (1995).
\newblock The dynamic interpolation problem: on riemannian manifolds, lie groups, and symmetric spaces.
\newblock \emph{Journal of Dynamical and Control Systems}, 1, 177--202.

\bibitem[{Giambò et~al.(2002)Giambò, Giannoni, and Piccione}]{Giambo}
Giambò, R., Giannoni, F., and Piccione, P. (2002).
\newblock An analytical theory for riemannian cubic polynomials.
\newblock \emph{IMA Journal of Math, Control, and Information}, 19(4), 445--460.

\bibitem[{Giambò et~al.(2004)Giambò, Giannoni, and Piccione}]{RiemannianPoly}
Giambò, R., Giannoni, F., and Piccione, P. (2004).
\newblock Optimal control on riemannian manifolds by interpolation.
\newblock \emph{Mathematics of Control, Signal and Systems}, 16(4), 278--296.

\bibitem[{Goodman(2023)}]{Goodmanthesis}
Goodman, J. (2023).
\newblock \emph{Path Planning on Riemannian Manifolds with Applications to Quadrotor Load Transportation}.
\newblock Ph.D thesis, Universidad Autónoma de Madrid.

\bibitem[{Goodman(2022)}]{goodman2022sufficient}
Goodman, J.R. (2022).
\newblock Local minimizers for variational obstacle avoidance on riemannian manifolds.
\newblock \emph{Journal of Geometric Mechanics}, 15(1), 59--72.

\bibitem[{Goodman and Colombo(2021)}]{goodman2021obstacle}
Goodman, J.R. and Colombo, L.J. (2021).
\newblock Variational obstacle avoidance with applications to interpolation problems in hybrid systems.
\newblock \emph{IFAC-PapersOnLine}, 54(19), 82--87.

\bibitem[{Goodman and Colombo(2022)}]{goodman2022collision}
Goodman, J.R. and Colombo, L.J. (2022).
\newblock Collision avoidance of multiagent systems on riemannian manifolds.
\newblock \emph{SIAM Journal on Control and Optimization}, 60(1), 168--188.

\bibitem[{Goodman and Colombo(2023)}]{goodman2022reduction}
Goodman, J.R. and Colombo, L.J. (2023).
\newblock Reduction by symmetry in obstacle avoidance problems on riemannian manifolds.
\newblock \emph{SIAM Journal on Applied Algebra and Geometry}.

\bibitem[{Jost(2008)}]{Jost}
Jost, J. (2008).
\newblock \emph{Riemannian geometry and geometric analysis}.
\newblock Springer.

\bibitem[{Noakes et~al.(1989)Noakes, Heinzinger, and Paden}]{noakes}
Noakes, L., Heinzinger, G., and Paden, B. (1989).
\newblock Cubic splines on curved spaces.
\newblock \emph{IMA Journal of Mathematical Control and Information}, 6(4), 465--473.

\end{thebibliography}
\end{document}